\numberwithin{equation}{section}
\renewcommand{\geq}{\geqslant}
\newtheorem{theorem}{Theorem}[section]
\newtheorem{lemma}[theorem]{Lemma}
\newtheorem{proposition}[theorem]{Proposition}
\newtheorem{definition}[theorem]{Definition}
\newtheorem*{main-theorem}{Main Theorem}
\newtheorem*{remark*}{Remark}
\numberwithin{equation}{section}
\DeclareMathOperator{\tr}{Tr}
\begin{document}

\title{A Matrix Valued Kuramoto Model.}
\author[1]{Jared Bronski}
\author[2]{Thomas Carty}
\author[2]{Sarah Simpson}
\affil[1]{Department of Mathematics, University of Illinois}
\affil[2]{Department of Mathematics, Bradley University}

\date{\today}

\maketitle

\begin{abstract}
Beginning with the work of Lohe\cite{Lohe1,Lohe2} there have been a number of papers\cite{Choi.Ha.2014, deville2019synchronization,Ha.Ryoo.2016,Ha.Ko.Ryoo.2017,Ha.Ko.Ryoo.2018} that have generalized the Kuramoto model for phase-locking to a non-commuting 
situation. Here we propose and analyze another such model. We consider a collection of symmetric matrix-valued variables that evolve in such a way as to try to align their eigenvector frames.  The phase-locked state is one where the eigenframes all align, and thus the matrices all commute. We analyze the stability of the phase-locked state  and show that it is stable. We also analyze a dynamic analog of the twist states arising in the standard Kuramoto model, and show that these twist states are dynamically unstable.
\end{abstract}

\section{Introduction} \label{sec:intro}
The Kuramoto model,
\begin{equation}
\frac{d \theta_i}{dt}=\omega_i +\gamma \sum \limits_i \sin(\theta_j-\theta_i),
\label{eq:Kuramoto}
\end{equation}
where $\omega_i \in \mathbb{R}$ and $\gamma$ is the coupling constant, is a simplified model for many processes involving the sychronization of nonlinear coupled oscillators. This model has been used in a wide range of biological and physical applications, including modeling the phenomena of jet lag\cite{LKLAGO.2016} and circadian rhythms\cite{Childs.Strogatz.2008}, and describing electrical networks\cite{Dorfler.2005,Nardelli.2014}. Numerous variations of the Kuramoto model have been developed and studied, including a non-Abelian generalization of the Kuramoto model known as the Lohe model. Lohe actually proposed several related models:  the first was a flow on unitary matrices
\begin{equation}
i \dot{{\bf U}_j} {\bf U}_j^* = {\bf H}_j - \frac{iK}{2N} \sum \limits_{k=1}^{N} c_{kj}({\bf U}_j {\bf U}_k^* - {\bf U}_k {\bf U}_j^*),
\label{eq:LoheModel1}
\end{equation} where $j=1, ..., N$, $c_{kj} \in \mathbb{R}$, $k \geq 0$ is the coupling strength, ${\bf U}_j$ is a $d \times d$ unitary matrix, ${\bf U}_j^*$ is the Hermitian conjugate of ${\bf U}_j$, and ${\bf H}_j$ is a $d \times d$ Hermitian matrix  whose eigenvalues correspond to the natural frequencies of the Lohe oscillator at node $j$. This is Equation (2) in Lohe's 2009 paper \cite{Lohe1}. In this work Lohe demonstrated that in the one dimensional case, where $U_i=e^{-i\theta_i}$, an element of the Abelian unitary group $U(1)$, his model reduces to standard Kuramoto \eqref{eq:Kuramoto} model. In the same paper Lohe also proposes several other models based on other matrix groups. Additionally, other non-Abelian variations of the Kuramoto model have been developed which reduce to \eqref{eq:Kuramoto} or variants when an Abelian Lie group is chosen, including the quantum Kuramoto model\cite{deville2019synchronization}. This model is of the form
\begin{equation}
\frac{d}{dt} {\bf X}_i \dot {\bf X}_i^{-1} = {\bf \Omega}_i + \frac{1}{2} \sum \limits_{j=1}^n \gamma_{ij} \left( f ({\bf X}_j {\bf X}_i^{-1}) - f ({\bf X}_i {\bf X}_j^{-1})\right),
\label{eq:QuantumKuramoto}
\end{equation} where $f$ is a real analytic function, ${\bf X_i}$ is an element of some Lie group (satisfying a certain mild structural condition) and ${\bf \Omega}_i$ is an
element of the Lie algebra.

In a 2010 paper\cite{Lohe2} Lohe proposes a somewhat different model, Equation (10), in the form
\begin{equation}
  i \hbar \frac{\partial }{\partial t} |\psi_i \rangle = {\bf H}_i |\psi_i \rangle + \frac{i \hbar \kappa}{2N} \sum_j a_{ij} \left(|\psi_j \rangle\langle \psi_i|\psi_i\rangle  - |\psi_i\rangle \langle \psi_j | \psi_i \rangle \right).
\label{eqn:Lohe2}
\end{equation}
Here each wavefunction  $|\psi_i \rangle$ evolves according to a Hamiltonian ${\bf H}_i$, playing the role of the frequency in the standard Kuramoto model, and  the wavefunctions interact through a nonlinear coupling. Our model is most closely related to this model, though the evolution takes place at the level of the operators, not the wavefunctions, so it is worthwhile to spend a little time discussing the structure of these models. In particular these models take the general form of a conservative (Hamiltonian) part plus a constrained gradient descent.
\begin{equation}
  u_t = {\bf J} \delta_u H - \delta_u E - \sum_i \mu_i \delta_u C_i.
  \label{eq:AbstractLohe}
\end{equation}
Here $\delta_u$ represents the gradient in the finite dimensional case, and the Euler-Lagrange operator in the infinite dimensional case, ${\bf J}$ is some
skew-adjoint operator representing the Hamiltonian structure, $E$ is some functional to be minimized, attainment of the minimum of $E$ representing synchronization, $C_i$ representing some functionals to be conserved, and $\mu_i$ representing Lagrange  multipliers to enforce the constraints that $C_i(u)$ be constant.
The Hamiltonian flow is, obviously, the ${\bf H}_i |\psi_i\rangle $ term in Equation \eqref{eqn:Lohe2}. The remaining terms are a constrained gradient flow: these terms act to decrease the Dirichlet-like energy
  \[
    E_d = \frac{\kappa}{4N} \sum_{i,j} \langle\psi_i-\psi_j|\psi_i - \psi_j \rangle
    \]
    subject to the $N$ constraints that the wavefunctions $|\psi_i \rangle$ be properly normalized: $\langle \psi_i | \psi_i \rangle = \Vert \psi_i\Vert^2 = 1$. To see this we note that the gradient flow on the functional
    \[
      \tilde E_d = \frac{\kappa}{4N} \sum_{i,j} \langle \psi_i-\psi_j|\psi_i-\psi_j\rangle - \sum_{i=1}^N \mu_i \langle \psi_i | \psi_i \rangle.
      \]
      Here the first term, the Dirichlet energy, measures the degree of alignment of the wavefuntions $|\psi_i\rangle$, while the scalar quantities $\mu_i$ are  Lagrange multipliers to maintain the constraint $\langle \psi_i | \psi_i \rangle = 1$. Physically this can be interpreted as conserving the particle number of the $i^{th}$ species, and so $\mu_i$ is essentially the chemical potential associated with species  $i$.

      If we take the dynamics to be
      \[
        \frac{\partial }{\partial t} |\psi_i \rangle = J \frac{\partial}{\partial \langle\psi_i |} H - \frac{\partial}{\partial \langle \psi_i |} \tilde E_d
        \]
        with Hamiltonian $H = \sum_i \langle \psi_i| H_i | \psi_i\rangle$ and symplectic form $J = -\frac{i}{\hbar}$ then we find the dynamics
        \[
            i \hbar \frac{\partial }{\partial t} |\psi_i \rangle = {\bf H}_i |\psi_i \rangle + \frac{i \hbar \kappa}{2N} \sum_j a_{ij} |\psi_j-\psi_i\rangle - \mu_i |\psi_i\rangle.
          \]
          The Lagrange multipliers $\mu_i$ are determined by the condition that the particle numbers of individual species $\langle \psi_i | \psi_i \rangle$ be conserved: $\frac{\partial}{\partial t} \langle \psi_i | \psi_i \rangle=0$. This gives
\[
  i \hbar \frac{\partial }{\partial t} |\psi_i \rangle = {\bf H}_i |\psi_i \rangle + \frac{i \hbar \kappa}{2N} \sum_j a_{ij} \left(|\psi_j \rangle - |\psi_i\rangle \frac{\langle \psi_j | \psi_i \rangle}{\langle\psi_i|\psi_i\rangle } \right)
  \]which is equivalent to \eqref{eqn:Lohe2} under a trivial rescaling of $|\psi_i\rangle$.
Similarly to derive Equation (20) in Lohe's 2009 paper we take $u = \bigoplus_{i=1}^N {\bf x}_i $, $H = \frac 12\sum_i \Vert {\bf x}_i\Vert^2 = \Vert u\Vert^2 $, $ E = \frac{\kappa}{4N} \sum_{i,j} a_{ij}\Vert {\bf x}_i -{\bf x}_j\Vert^2$, ${\bf J} = \bigoplus_{i=1}^N {\bf \Omega}_i$ and the constraints $C_i$ to be $C_i({\bf x}_i) = \frac12 \Vert {\bf x}_i \Vert^2=\text{constant}$. Substituting these  into Equation (\ref{eq:AbstractLohe}) gives
\[
\frac{d{\bf x}_i}{dt} = \Omega_i {\bf x}_i +  \frac{\kappa}{N} \sum_{j} a_{ij}({\bf x}_j-{\bf x}_i) - \mu_i {\bf x}_i.
\]
In this spirit we consider a different generalization of the Kuramoto flow, where the phase space is $N$ $k \times k$ real symmetric matrices. We assume that these matrices evolve according to a gradient flow, where the energy functional is given by the sum of the Hilbert-Schmidt norms of the pairwise matrix commutators.  Since this is a gradient flow matrices evolve to minimize energy, whose global minimum is easily seen to be when the matrices pairwise commute.

Throughout this paper all matrices ${\bf M}_i$ are assumed to be real symmetric, while all matrices ${\bf \Omega}_i$ are assumed to be skew-symmetric.  In the remainder of Section \ref{sec:intro} we further motivate the current model.  We begin by deriving the matrix analog of the Kuramoto model with all frequencies equal.  We finish the introduction by establishing a concrete connection to a variant of the standard Kuramoto model.  In particular, in Section \ref{subsec:2X2case} we demonstrate that for the case of $2 \times 2$ matrices, this system can be described by a set of equations which is similar to the classical Kuramoto model with Hebbian interactions. In Section \ref{Twist States} we show that in the  $2 \times 2$ case, this model has a set of unstable fixed points associated with the case in which the matrices $\{{\bf M}_1, {\bf M}_2\ldots {\bf M}_n\}$ have eigenvectors pointing in directions which are evenly spaced about the unit circle. This is related to twist states in the classical Kuramoto model. Lastly, in Section \ref{Stability of Commuting Matrices} we show that for the set of pairwise commuting matrices of any size, this matrix-valued model has conditionally stable fixed points.

\subsection{Equations of motion and basic structure}
\label{subsec:BriefDerivation}

As argued earlier all of the models proposed by Lohe take the same abstract form: they consist of a conservative (Hamiltonian) part representing some abstract rotation, together with a constrained gradient flow part.  We follow this same philosophy to derive a non-commuting symmetric matrix valued flow.
We first define a gradient flow on a set of symmetric matrices. We will let $[\cdot,\cdot]$ denote the usual matrix commutator $[{\bf A},{\bf B}]={\bf A}{\bf B}-{\bf B}{\bf A}$ and recall that for real symmetric matrices the Hilbert-Schmidt norm is $\|{\bf A}\|^2=\tr {\bf A}^2$.  We now define an energy function $E$ to be a weighted sum of the Hilbert-Schmidt norms of all pair-wise commutators,
$$
E= \frac{1}{4} \sum \limits_{i,j} a_{ij} \tr\left([{\bf M}_{i},{\bf M}_{j}]^2\right).
$$
If we assume that $a_{ij}>0$ and symmetric then $E\geq 0$, with $E=0$ if and only if the matrices $\{{\bf M}_j\}_{j=1}^N$ all commute. The dynamics of the matrices ${\bf M}_i$ will be gradient flow on the function $E$.
\[
\frac{d{\bf M}_i}{dt} = -\frac{\partial E}{\partial {\bf M}_i}.
\]
The notation here is that if $E$ is a scalar valued function of a $k\times k$  matrix ${\bf M}$ then $\frac{\partial f}{\partial {\bf M}}$ is a $k\times k$ matrix whose entries are the derivatives of $E$ with respect to the corresponding entry of ${\bf M}$, $\left(\frac{\partial f}{\partial {\bf M}}\right)_{ij} =  \frac{\partial f}{\partial M_{ij}}.$ A straightforward calculation -- see Appendix \ref{app:DerivationOfFlow} for details -- gives the equations of motion as
\begin{equation}
  \frac{d{\bf M}_i}{dt} = - \sum_{j} a_{ij} [{\bf M}_j,[{\bf M}_i,{\bf M}_j]].
  \label{eq:MatrixMoto}
\end{equation}
These equations are not obviously related to the standard Kuramoto model, being cubic in the matrix entries instead of trigonometric, but we next show that, at least in the case of $2\times 2$ matrices the equations can be put into a form that is strongly reminiscent of the classical Kuramoto model. Specifically we show that in terms of the eigenvalues and the angles defining the eigenvectors the flow can be written as a Kuramoto flow for the angles together with a somewhat unusual Hebbian-type evolution for the eigenvalues, which play the role of a coupling strength.

More generally we can consider additional terms to this pure gradient flow representing a conservative/Hamiltonian dynamics and/or constraints. From the point of view of
matrices perhaps the most natural assumption is to constrain the Hilbert-Schmidt or
Euclidean norms of the operators: $\Vert {\bf M}_i\Vert^2 = \tr({\bf M}_i^2) = \text{\rm constant}$. This leads to a more general system of the form
\begin{equation}
  \frac{d{\bf M}_i}{dt} = [{\bf \Omega}_i, {\bf M}_i] - \sum_{j} a_{ij} [{\bf M}_j,[{\bf M}_i,{\bf M}_j]] - \mu_i {\bf M}_i.
  \label{eq:FullMatrixMoto}
\end{equation}
where the Lagrange multiplier $\mu_i,$ given by $\mu_i = \frac{\sum_j a_{ij} \tr({\bf M}_i [{\bf M}_j,[{\bf M}_i,{\bf M}_j]]))}{\tr({\bf M}_i^2)} = \frac{\sum_j 2 a_{ij} \tr(({\bf M}_i {\bf M}_j)^2 - ({\bf M}_i {\bf M}_j)^2)}{\tr {\bf M}_i^2},$ conserves the Hilbert-Schmidt norm $\tr({\bf M}_i^2)$ and ${\bf \Omega}_i$ is a real skew-symmetric matrix.  The  conservative term $[{\bf \Omega},{\bf M}_i]$
is the standard sort of term one sees in the evolution equations for an operator in the
interaction representation.

We note a few invariance properties of Equation (\ref{eq:FullMatrixMoto}) that generalize the analogous properties of the Kuramoto model. In the classical Kuramoto model the dynamics is invariant under the common rotation $\theta_i \mapsto \theta_i + \alpha$. Here this extends to orthogonal invariance: if one conjugates all ${\bf M}_i$ by the same orthogonal matrix ${\bf O}$, $\tilde {\bf M}_i={\bf O}{\bf M}_i{\bf O}^T$ then the new variables $\tilde {\bf M}_i$ satisfy (\ref{eq:FullMatrixMoto}) as well. Similarly in the classical Kuramoto if one moves to a uniformly rotating coordinate system $\theta_i  \mapsto \theta_i + \bar \omega t$ then this is the same as subtracting a mean from all of the frequencies: $\omega_i \mapsto \omega_i - \bar \omega$.  Here we have a similar invariance. If we move to a  coordinate system that rotates uniformly: $\tilde {\bf M}_i = e^{\tilde {\bf \Omega} t } {\bf M}_i e^{-\tilde {\bf \Omega} t }$, where $\tilde{\bf \Omega}$ is a skew-symmetric matrix, then (\ref{eq:FullMatrixMoto}) becomes
\begin{equation}
 \frac{d{\tilde {\bf M}}_i}{dt} = [{\bf \Omega}_i-\tilde{\bf \Omega}, {\tilde {\bf M}}_i] - \sum_{j} a_{ij} [{\tilde {\bf M}}_j,[{\tilde {\bf M}}_i,{\tilde {\bf M}}_j]] - \mu_i {\tilde {\bf M}}_i.
\label{eqn:MatrixMoto2}
\end{equation}

Note that the first invariance in particular has important implications for the spectrum of the linearization about a fixed point. The fact that the dynamics is invariant under conjugation by an element of the orthogonal group ${\bf O}(n)$ implies that the kernel of the linearized operator has dimension at least $\frac{n(n-1)}{2}$. We will discuss this further in a later section.

In most of this paper we will consider ${\bf \Omega}_i=0$, so we are considering strict synchronization rather than phase-locking. In our analysis of the twist states the constrained version of the equations arises, with the Lagrange multipliers arising from a change of variable rather than an explicit constraint.

\subsection{Spectral coordinates for the $2\times2$ case: reduction to Kuramoto form. }
\label{subsec:2X2case}

In order to build intuition we first consider the lowest dimensional case of possible interest, $2 \times 2$ real symmetric matrices, under the assumptions that $a_{ij}=1, \mu_i=0,{\bf \Omega}_i=0$ -- no constraints, no external frequency forcing and equally weighted all-to-all coupling.    In the case of the Lohe model when one restricts to the Abelian case the dynamics reduces to that of the classical Kuramoto. In our case the dynamics in the $2\times 2$ case is slightly more complicated, and can be expressed as a Kuramoto type evolution for the eigenangle, with the eigenvalues entering as an effective coupling strength between angles, together coupled with an evolution for the eigenvalues.

Let $\{{\bf M}_i\}$ be a collection of $N$ symmetric matrices. Using the fact that symmetric matrices are orthogonally diagnonalizable, we write each ${\bf M}_i$ as
$$ {\bf M}_i = {\bf R}(\theta_i){\bf \Lambda}_i {\bf R}(-\theta_i)= \begin{bmatrix}
\cos(\theta_i) & \sin(\theta_i)  \\
-\sin(\theta_i) & \cos(\theta_i)
\end{bmatrix}  \begin{bmatrix} \lambda_i^1 & 0 \\ 0 & \lambda_i^2 \end{bmatrix} \begin{bmatrix}
\cos(\theta_i) & -\sin(\theta_i)  \\
\sin(\theta_i) & \cos(\theta_i)
\end{bmatrix}.$$

Here $\lambda_i^1$ and $\lambda_i^2$ represent the two eigenvalues for the $i$-th matrix and $\theta_i$ represents the angle of rotation of the orthogonal eigenframe from the standard axes.

We now substitute these representations of ${\bf M}_i$ into \eqref{eq:MatrixMoto}.  It is somewhat more convenient  to work with the equivalent  form
$$
{\bf M}_i
=  \frac{1}{2}(\lambda_i^1 +\lambda_i^2) \begin{bmatrix}
1 & 0  \\
0 & 1 \end{bmatrix} + \frac{1}{2}(\lambda_i^2 -\lambda_i^1)\begin{bmatrix} -\cos(2\theta_i) & \sin(2\theta_i) \\ \sin(2\theta_i) & \cos(2\theta_i) \end{bmatrix}.
$$
For a fixed $i$, we take the derivative of ${\bf M}_i$ and get
$$
 \frac{d{\bf M}_i}{dt}=\frac{1}{2}(\dot{\lambda_i^1} +\dot{\lambda_i^2}) \begin{bmatrix}
1 & 0  \\
0 & 1 \end{bmatrix} + \frac{1}{2} (\dot{\lambda_i^2}-\dot{\lambda_i^1}) \begin{bmatrix} -\cos(2\theta_i) & \sin(2\theta_i) \\ \sin(2\theta_i) & \cos(2\theta_i) \end{bmatrix} + (\lambda_i^2-\lambda_i^1) \dot{\theta_i} \begin{bmatrix} \sin(2\theta_i) & \cos(2\theta_i) \\ \cos(2\theta_i) & -\sin(2\theta_i) \end{bmatrix}. \label{eq:2.3}
$$
After some algebra we find the following expression for the nested commutator $[{\bf M_j},[{\bf M}_i,{\bf M}_j]]$:
\begin{equation}
[{\bf M}_j,[{\bf M}_i,{\bf M}_j]]=\frac{1}{2} (\lambda_j^1-\lambda_j^2)^2 (\lambda_i^1-\lambda_i^2)\sin2(\theta_i-\theta_j) \begin{bmatrix} \sin(2\theta_j) & \cos(2\theta_j) \\ \cos(2\theta_j) & -\sin(2\theta_j) \end{bmatrix}.
\label{eq:2x2DoubleCommutator}
\end{equation}
Summing \eqref{eq:2x2DoubleCommutator} over all $j$, yields the representation of the right-hand side of \eqref{eq:MatrixMoto} under this decomposition.  The advantage of this representation is that we have passed the matrix dynamics purely onto the time-dependent eigenvalues and eigenangles.  In order to more clearly see this, we again take advantage of the structure of the symmetric matrices.
Note that the matrices ${\bf E}_1=\begin{bmatrix}
1 & 0  \\
0 & 1 \end{bmatrix}$,
${\bf E}_2=\begin{bmatrix} -\cos(2\theta_i) & \sin(2\theta_i) \\ \sin(2\theta_i) & \cos(2\theta_i) \end{bmatrix}$, and  ${\bf E}_3= \begin{bmatrix} \sin(2\theta_i) & \cos(2\theta_i) \\ \cos(2\theta_i) & -\sin(2\theta_i) \end{bmatrix}$ form an orthogonal basis for all $2\times 2$ symmetric matrices under the inner product $\langle A, B \rangle = \tr(A^T B)$. Using this basis, it is straightforward to find that the $N$ equations of \eqref{eq:MatrixMoto} are equivalent to the following evolution equations on the $2N$ eigenvalues $\lambda_i^1$, $\lambda_i^2$ and $N$ eigenangles $\theta_i:$
\begin{equation} \dot{\lambda_i^1}+\dot{\lambda_i^2}=0 \label{eq:2x2EigenvalueSumDynamics} \end{equation}
\begin{equation} \dot{\lambda_i^2}-\dot{\lambda_i^1}= - (\lambda_i^2-\lambda_i^1)\sum \limits_{j=1}^{N}(\lambda_j^2-\lambda_j^1)^2 \sin^2(2(\theta_j-\theta_i)) \label{eq:2x2CouplingDynamics} \end{equation}
\begin{equation} \dot{\theta_i} =
\sum \limits_{j=1}^{N}\frac{1}{4}(\lambda_j^2-\lambda_j^1)^2\sin(4(\theta_j-\theta_i)) \label{eq:2x2Eigen-angleDynamics} \end{equation}

More generally if one carries out the same calculation for the model with external frequency forcing and constraints, Equation \eqref{eq:FullMatrixMoto}, one finds
\begin{equation} \dot{\lambda_i^1}+\dot{\lambda_i^2}=-\mu_i (\lambda_i^1 + \lambda_i^2) \label{eq:2x2EigenvalueSumDynamics2} \end{equation}
\begin{equation} \dot{\lambda_i^2}-\dot{\lambda_i^1}= - (\lambda_i^2-\lambda_i^1)\sum \limits_{j=1}^{N}(\lambda_j^2-\lambda_j^1)^2 \sin^2(2(\theta_j-\theta_i)) - \frac12 \mu_i (\lambda_i^2-\lambda_i^1) \label{eq:2x2CouplingDynamics2} \end{equation}
\begin{equation} \dot{\theta_i} = \omega_i +
  \sum \limits_{j=1}^{N}\frac{1}{4}(\lambda_j^2-\lambda_j^1)^2\sin(4(\theta_j-\theta_i)) \label{eq:2x2Eigen-angleDynamics2}
\end{equation}
where $\Omega_i = \left[\begin{array}{cc}0 & \omega_i \\ -\omega_i & 0 \end{array}\right]$ and $\mu_i = \sum_j \frac{\tr({\bf M}_i [{\bf M}_j,[{\bf M}_i , {\bf M}_j]])}{\tr({\bf M}_i^2)}=- \frac12 \sum_j  \frac{(\lambda_j^1-\lambda_j^2)^2 (\lambda_i^1-\lambda_i^2)^2\sin^2 2 (\theta_i - \theta_j)}{(\lambda_i^1)^2 + (\lambda_i^2)^2}$. Note that $\mu_i$ is always negative so it tends to increase the size of the eigenvalues.

Together, Equations \eqref{eq:2x2CouplingDynamics} and \eqref{eq:2x2Eigen-angleDynamics} takes a form similar to that of classical Kuramoto, but with a dynamic coupling parameter that evolves in response to the phase-differences.  Such models are very natural in many applications, including neuroscience, and have received considerable attention in the literature \cite{bronski2017stability,ha2016synchronization,Isakov2014}.  In the theoretical neuroscience literature, such systems are called Hebbian if the coupling strength increases if the oscillators are in phase, and  decreases if the oscillators are out of phase, a behavior suggested in the classical neuroscience work of Hebb\cite{Hebb2005}. The behavior here is similar: in the absence of the constraint the coupling strength $(\lambda_j^2-\lambda_j^1)^2$ is always decreasing, but the rate of decrease goes to zero as the alignment difference $\theta_i-\theta_j$ approaches zero. Enforcing the constraint (in other words including the $\mu_i$ terms) gives a more classically Hebbian behavior -- the coupling strength is increasing in time for oscillators that are in phase and decreasing in time for oscillators that are out of phase.

That said, one obvious difference between (\ref {eq:2x2Eigen-angleDynamics}) and the classical Kuramoto model is a rescaling of the angle difference by 4: the righthand side of  (\ref {eq:2x2Eigen-angleDynamics}) is a periodic function of $\theta_i$ with period $\pi/2$ rather than period $2\pi, $ as in Kuramoto.  However, there is a clear geometric interpretation of this factor of four, since we are considering alignment of an orthogonal  frame rather than individual vectors. From the point of view of commutation it is not important which eigenvectors of ${\bf M}_i$ align with which eigenvectors of ${\bf M}_j$, so one expects a invariance under rotations through angle $\pi/2$, whence the factor of four. Another interesting observation is that, since the coupling between frames is a function of the eigenvalue difference $(\lambda_j^2-\lambda_j^1)$ there are two ways in which the system can move toward a fixed point: either the frames can align $\theta_i = \theta_j$ or the eigenvalues can
degenerate $\lambda_j^2=\lambda_j^1$ in which case ${\bf M}_j$ decouples from the
rest of the matrices, as it is a multiple of the identity and thus commutes with all the matrices. One fixed point of this system is the case in which all the angles, $\theta_i$, are equal. This leads to a set of matrices which all commute for any $\Delta \lambda_i$. Hence, the right hand side of \eqref{eq:MatrixMoto} is equal to zero and we are at a fixed point. We analyze the stability of these fixed points in Section \ref{Stability of Commuting Matrices}.

Figure $\ref{fig:commutatornorm2x2}$ demonstrates the dynamics of $\eqref{eq:MatrixMoto}$ through a numerical example for the case of three $2 \times 2$ matrices. In figure $\ref{fig:commutatornorm2x2}$ we show a plot of the Hilbert-Schmidt norm of the pairwise commutators of each of the matrices as a function of time together with a plot of the angles between the eigenvector frames as a function of time. It is clear that the matrices evolve to their minimum energy, which is the case in which all of the matrices commute pairwise.

The next figure, (\ref{fig:BigEvolution2x2}), depicts the evolution of the eigenframes for eight $2\times 2$ matrices. Each matrix is represented by a pair of lines. The lines point in the direction of the eigenvector, and the length is proportional to the corresponding eigenvalue, so it is possible to determine the eigenvectors and the magnitude (though not the sign) of the eigenvalues from such a plot. The eigenframes are depicted at times $t=0,.001,.002,.003,.004,.005$. Note that the evolution occurs relatively quickly in this case, as we have not scaled the interaction with $\frac{1}{N}$ as is customary with the Kuramoto model. One can see that the lengths
of the eigenvalues changes as the matrices evolve, and that there is a strong tendency for the eigenvalues of a given matrix to get closer together. (This is apparent from the equations, as the trace of each matrix is conserved, while the eigenvalue difference is a decreasing function.) We also note that the matrices with the largest eigenvalues tend to dominate the dynamics -- they ``pull'' the small matrices towards them. This is evident from the equations of motion, where the coupling strength between matrices $i$ and $j$ is proportional to the square of the difference of
the eigenvalues of ${\bf M}_j$.

\begin{figure}
\includegraphics[width = 0.45 \textwidth]{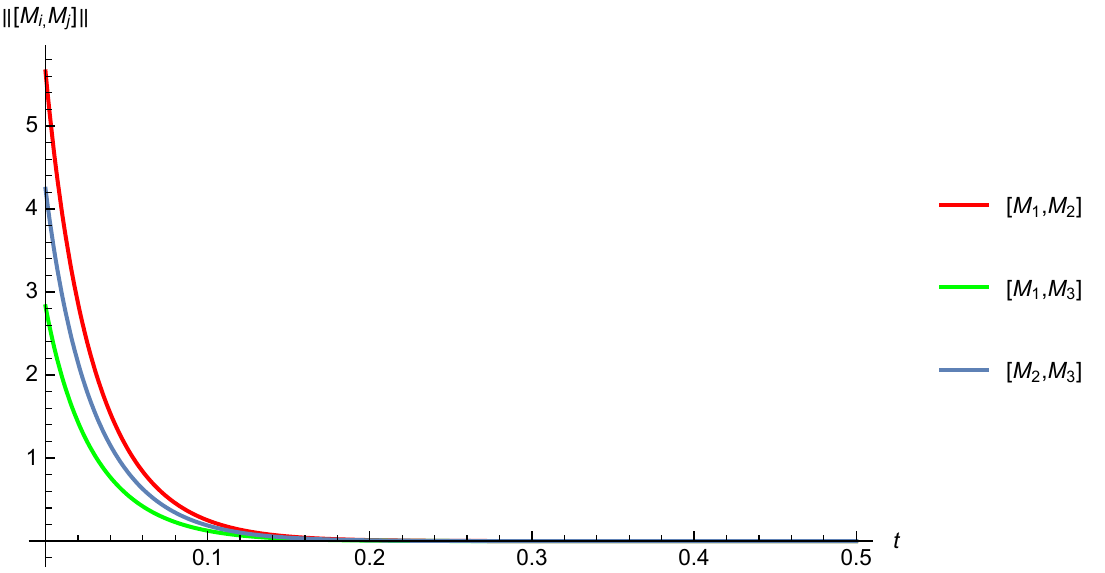}
\includegraphics[width = 0.45 \textwidth]{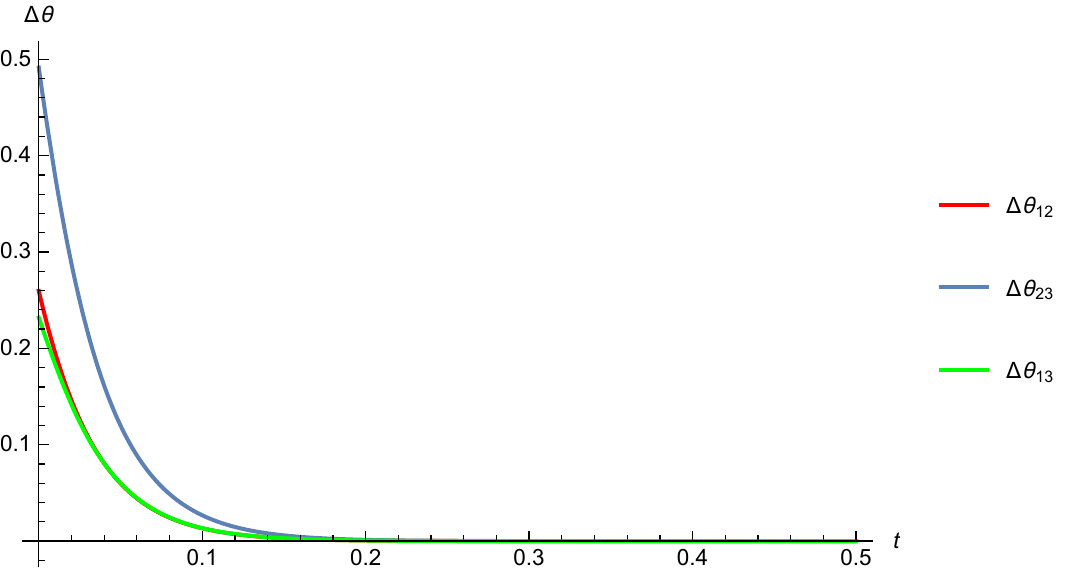}
\caption[]{The Hilbert-Schmidt norm of the pairwise commutators [left] and the angle between eigenframes [right] as a function of time in an example with three $2\times 2$ matrices. The initial conditions in this example are: $M_1(0)=$ $\begin{bmatrix} 1 & 2 \\ 2 & 3 \end{bmatrix}$, $M_2(0)= \begin{bmatrix} 2 & 1 \\ 1 & 5 \end{bmatrix}$, and $M_3(0)= \begin{bmatrix} 0 & 1 \\ 1 & 0 \end{bmatrix}$.}
\label{fig:commutatornorm2x2}
\end{figure}

\begin{figure}[htp]
\centering
\includegraphics[width=0.45\textwidth]{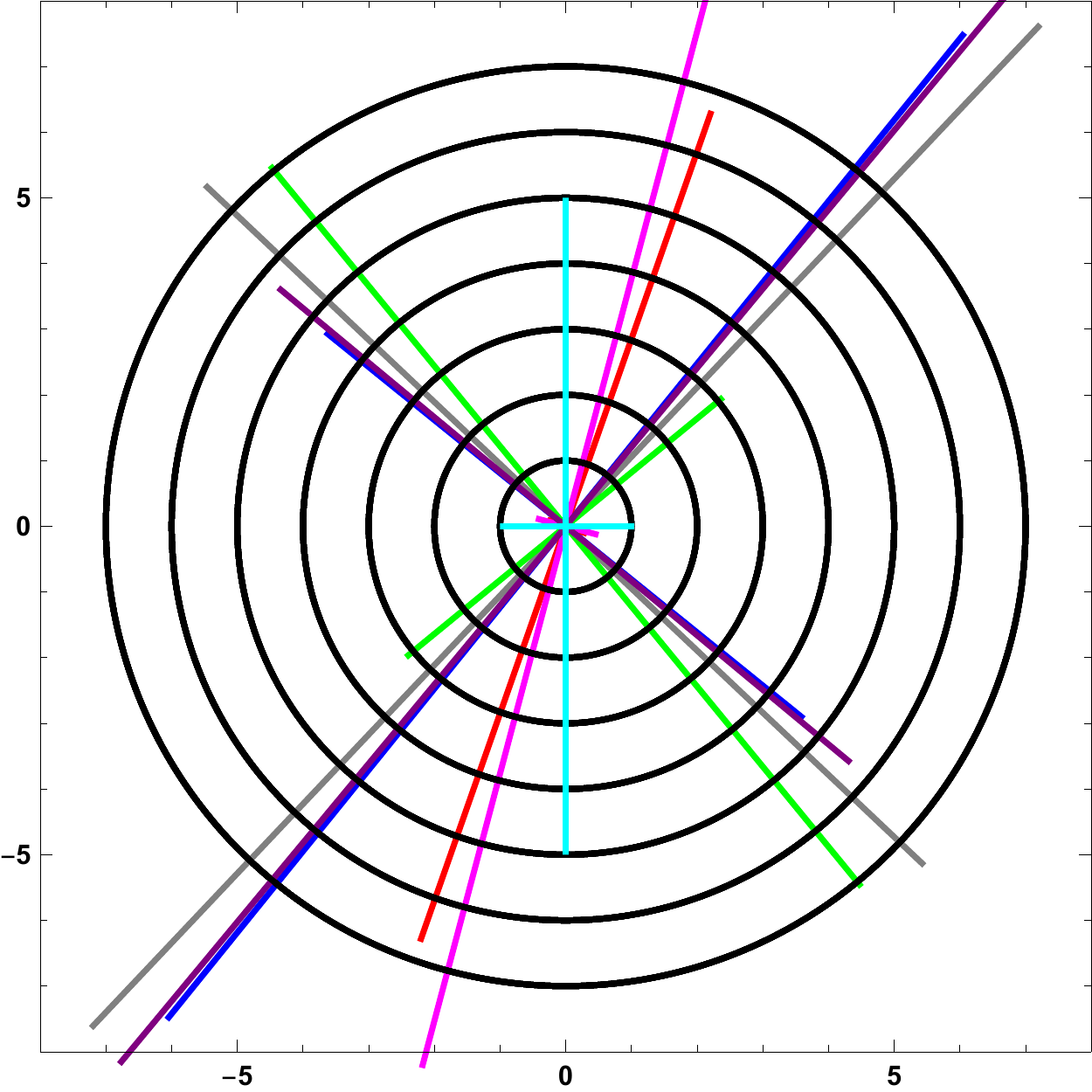}
\includegraphics[width=0.45\textwidth]{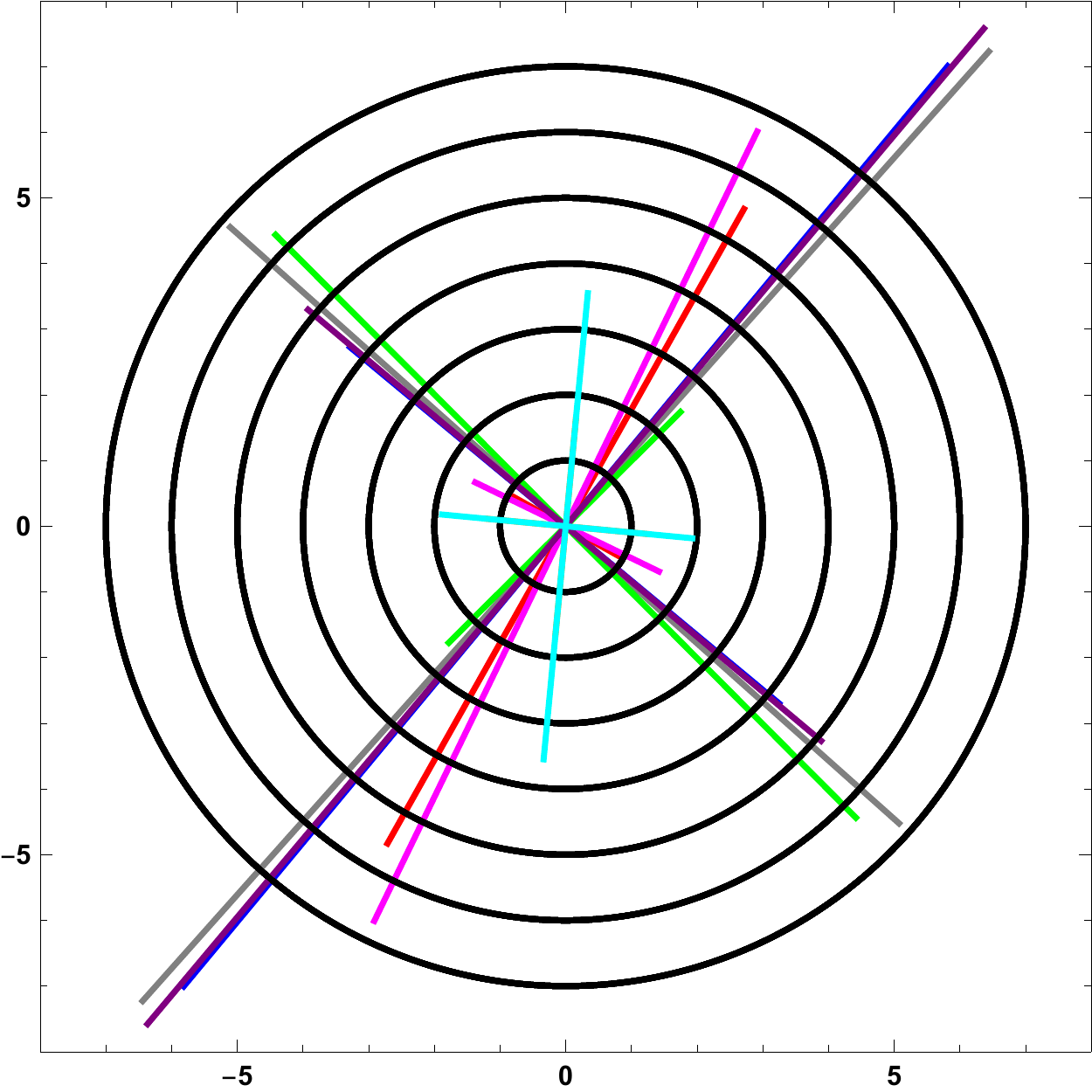} \\
\includegraphics[width=0.45\textwidth]{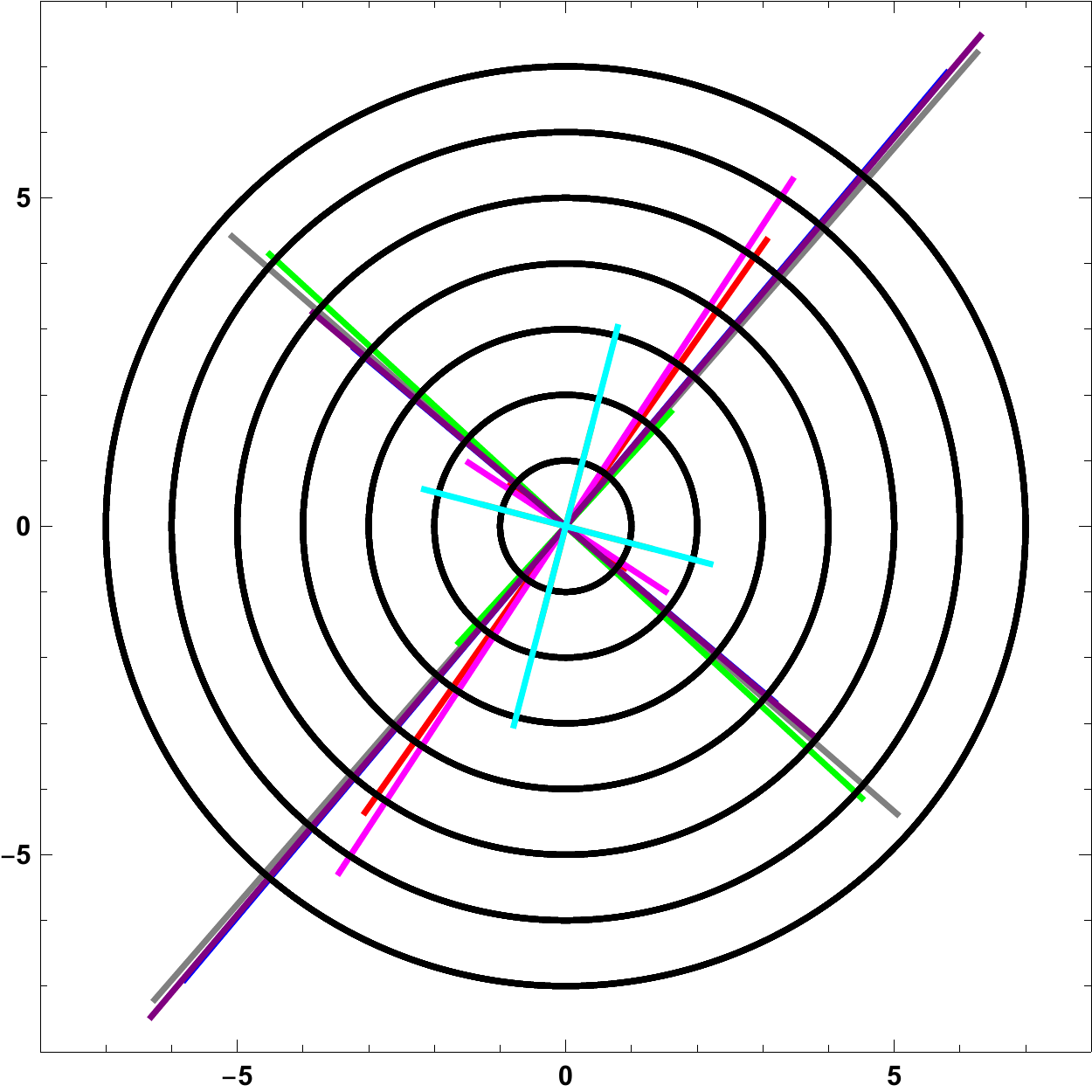}
\includegraphics[width=0.45\textwidth]{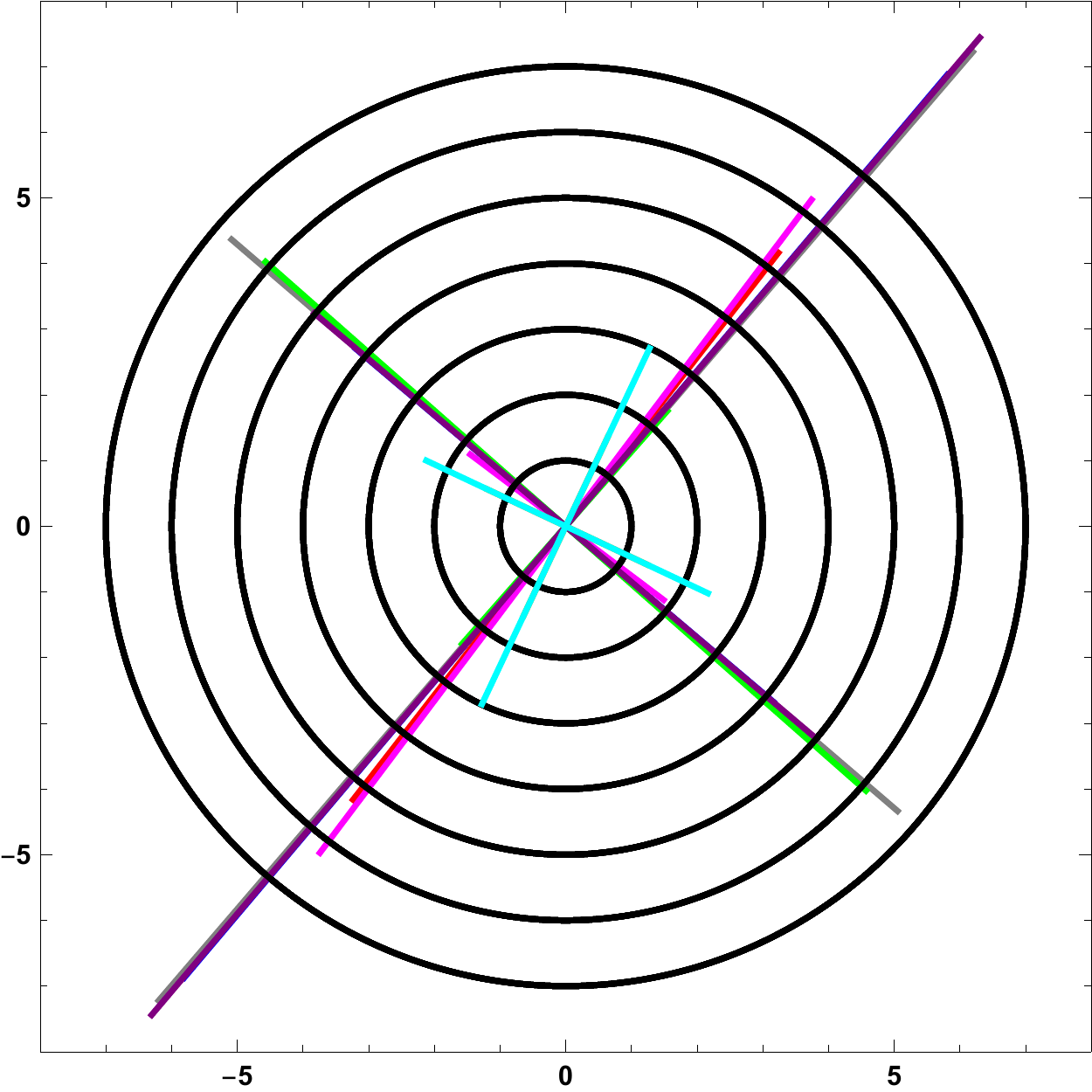} \\
\includegraphics[width=0.45\textwidth]{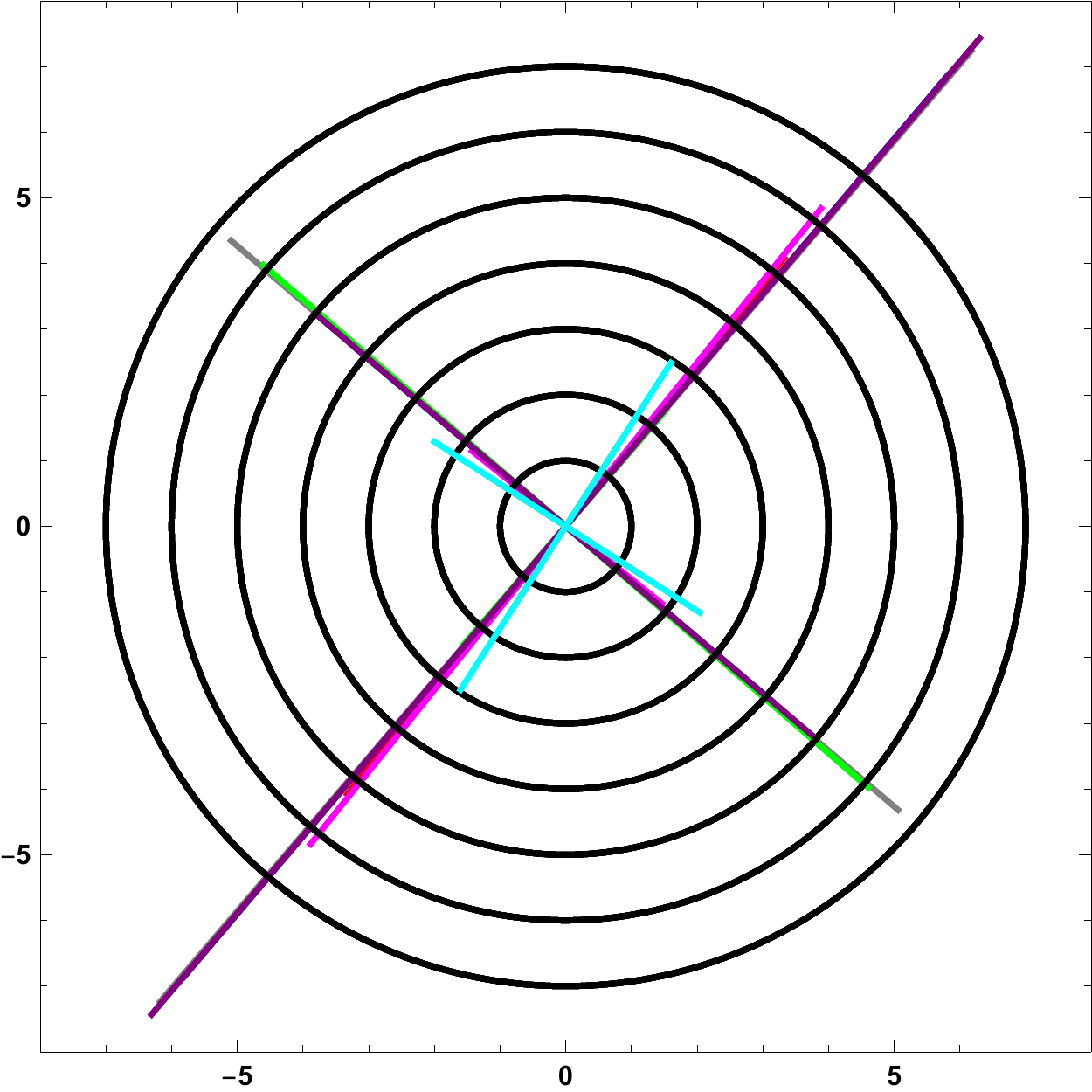}
\includegraphics[width=0.45\textwidth]{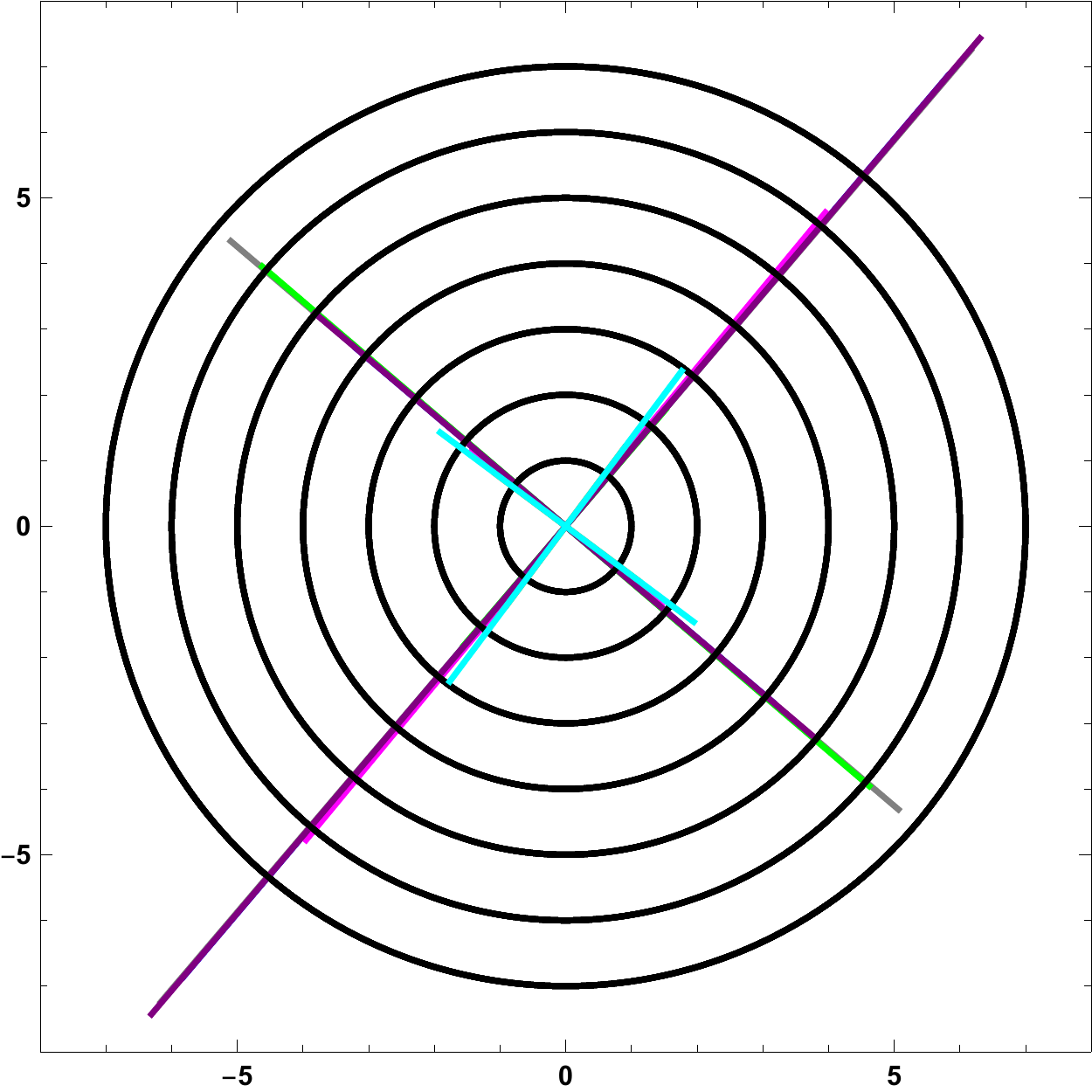}
\caption[]{The snapshots of the evolution of a set of eight eigenframes for times $t=0,.001,.002,.003,.004,.005$. The initial conditions are given in the text.}
\label{fig:BigEvolution2x2}
\end{figure}

\section{Twist States}
\label{Twist States}

It is well-known that, in addition to synchronous or phase-locked states the Kuramoto and related models admit twist or splay states, those where $\theta_j = \frac{2 \pi (j-1) n}{N}$, with the integer $n$ representing the winding number of the solution.  Mirollo \cite{mirollo1994splay} gave a very general topological existence for proof of such states for models of the form
\[
  \frac{d\theta_j}{dt} = f_j(\theta_1,\theta_2,\ldots,\theta_N)
  \]
  where the right shift operator $\sigma: \theta_j \mapsto \theta_{j+1 \mod N}$ acts equivariantly on the time evolution operator $F_t$: $F_t \circ \sigma = \sigma F_t$. In a very different vein Ferguson \cite{Ferguson2018_twists} and Delbays, Coletta and Jacquod \cite{Delbays.Coletta.Jacquod.2017} constructed twist-type solutions for the Kuramoto model on very general graphs (which typically do not satisfy Mirollo's equivariance criteria.) Medvedev and Tang studied the stability of such states on Cayley
 and random graphs \cite{medvedev2015stability}.

The properties of these depend in an interesting way on the topology of the graph. In the case of the complete graph topology (so-called all-to-all coupling) the twist states are always unstable, whereas if the graph is a single cycle the twist states are stable as long as the winding number $n$ is small enough.

In this section we construct the analogs of the twist states for the Kuramoto flow described by Equation \eqref{eq:MatrixMoto}. Interestingly the natural analog of the twist states is not a fixed point of the motion, but rather is a dynamic state on a submanifold along which all of the matrices exhibit slow (algebraic) decay to a multiple of the identity matrix. We analyze the stability of this slow decay: by a dynamic rescaling we can express these solutions as fixed points of related equations, in essence moving along an orbit of the scaling group with the solution. In this co-scaling coordinate system we analyze the stability of the fixed point and show that it is unstable.

\begin{definition}
 We define a \textbf{twist state} in $\mathbb{R}^2$ as one for  which the matrices $\{{\bf M}_1, {\bf M}_2, ..., {\bf M}_N\}$ have eigenvectors pointing in directions which are evenly spaced around the unit circle, and the eigenvalue difference (the effective coupling strength) is constant: in other words $\Delta \lambda_j:=\lambda_j^2-\lambda_j^1 $ is independent of $j$ and $\theta_j=\frac{\pi (j-1)n}{2N}$.
 \label{def:twiststate}
\end{definition}

 Again we note the factor of $4$ in the definition of the twist states for the matrix model versus the classical Kuramoto model: as before this reflects the alignment of orthogonal frames in ${\mathbb R}^2$ as opposed to angles. We see that the righthand sides of equations $\eqref{eq:2x2EigenvalueSumDynamics} \text{ and } \eqref{eq:2x2Eigen-angleDynamics}$ are equal to zero for this choice of $\Delta \lambda$  and $\boldsymbol{ \theta}$, implying that for this initial data the eigenvectors of ${\bf M}_j$ and the traces  of ${\bf M}_j$ do not evolve in time, and thus we get a single equation $\eqref{eq:2x2CouplingDynamics}$  governing the evolution of the eigenvalue difference $\lambda_j^2-\lambda_j^1$. It is clear that the right-hand side  of equation $\eqref{eq:2x2CouplingDynamics}$ is negative, so the eigenvalue difference decays in time. The decay is clearly algebraic, and given the fact that the righthand side of equation $\eqref{eq:2x2CouplingDynamics}$ is cubic in $\Delta \lambda$ it is simple to compute that the eigenvalue difference decays like $t^{-\frac12}$. Thus we have an invariant submanifold where $\theta_j = \frac{\pi (j-1) n}{2N}$ and $\Delta \lambda_j$ is independent of $j$, and along this submanifold all of the matrices decay in time to multiples of the identity matrix: ${\bf M}_j \rightarrow \frac{\tr {\bf M}_j}{2} I.$

Our goal is to understand the stability of this simple decay along a submanifold to a general perturbation. To begin with we again note that the dynamics of the Equation \eqref{eq:MatrixMoto} is invariant under the $N$ parameter family of maps ${\bf M}_j \mapsto {\bf M}_j + \alpha_j I$, where $I$ is the identity matrix. Because of this invariance we can always assume that $\tr {\bf M}_j=0.$ Our next step is to do a dynamic rescaling to turn these decaying solutions into fixed points of a modified
equation. More specifically we let $(\lambda_j^2-\lambda_j^1)=\alpha (t)$. Then, $\eqref{eq:2x2CouplingDynamics}$ becomes
\begin{align*}
\frac{d\alpha}{dt}&=-\alpha^3 \sum \limits_{k=1}^{N}\sin^2\left[2\left(\frac{\pi n(k-1)}{2N}-\frac{\pi n (j-1)}{2N}\right)\right] \\
&=-\alpha^3 \sum \limits_{k=1}^{N} \frac{1}{2}\left[1-\cos\left(4\left(\frac{\pi n(k-1)}{2N}-\frac{\pi n (j-1)}{2N}\right)\right)\right]\\
&=-\alpha^3 \sum \limits_{k=1}^{N} \frac{1}{2}\left[1-\cos\left(\frac{2 \pi n(k-1)}{2N}-\frac{2 \pi n (j-1)}{2N}\right)\right].\\
\end{align*}
An easy calculation shows that the righthand side sums to $-\frac{N \alpha^3}{2},$ leading to the separable equation  $\frac{d \alpha}{dt}=-\alpha^3 \frac{N}{2}.$
Solving for $\alpha$, we get
\begin{equation}
\alpha(t)=\frac{\alpha(0)}{\sqrt{1+N \alpha ^2(0) t}},
\label{eq:TwistDeltaLambdaRescale}
\end{equation} and thus we see that the eigenvalues decay algebraically. In order to study the stability of this invariant submanifold we re-scale (\ref{eq:2x2CouplingDynamics}) in order to obtain fixed points. To do this, we let
${\bf M}_j(t)= \alpha (t) \tilde{{\bf M}}_j(t)$
Then,
$$\frac{d{\bf M}_j}{dt}=\alpha(t)\frac{d\tilde{{\bf M}}_j}{dt}+\frac{d\alpha}{dt}\tilde{{\bf M}}_j=\sum \limits_{k=1}^{N} \alpha^3(t)[\tilde{{\bf M}_k},[\tilde{{\bf M}}_j,\tilde{{\bf M}_k}]]$$
$$\frac{d\tilde{{\bf M}}_j}{dt}=\alpha^2(t)\sum \limits_{k=1}^{N}[\tilde{{\bf M}}_k,[\tilde{{\bf M}}_j,\tilde{{\bf M}}_k]]-\frac{\frac{d\alpha}{dt}}{\alpha}\tilde{{\bf M}}_j$$
Using the fact that $\alpha$ satisfies $\frac{d \alpha}{dt}=-\alpha^3 \frac{N}{2}$, we get
$$
\frac{d\tilde{{\bf M}}_j}{dt}=\alpha^2(t)\left(\sum \limits_{k=1}^{N}[\tilde{{\bf M}}_k,[\tilde{{\bf M}}_j,\tilde{{\bf M}}_k]]+\frac{N}{2}\tilde{{\bf M}}_j \right).
$$
In order to remove the algebraic decay from this system, we rescale time via $\alpha^2(t) dt=ds$.  That is in the rescaled time
\begin{align*}
  &ds=\frac{\alpha^2(0) dt}{(1+ N\alpha^2(0)t)}& \\
  & s=\frac{\ln(1+ N \alpha^2(0) t)}{N}&
\end{align*}
a twist solution is a fixed point of
$$\frac{d\tilde{{\bf M}}_j}{ds}=\left(\sum \limits_{k=1}^{N}[\tilde{{\bf M}}_k,[\tilde{{\bf M}}_j,\tilde{{\bf M}}_k]]+\frac{N}{2}\tilde{{\bf M}}_j \right).$$
Note that this is exactly the form of the contrained gradient flow, if we constrain the Hilbert-Schmidt norm of each matrix ${\tilde {\bf M}}_i$ to remain constant.

As we did in Section \ref{subsec:2X2case}, using the spectral decomposition of $\tilde {\bf {\bf M}}_j = {\bf R}(\theta_j){\bf \Lambda}_j {\bf R}(-\theta_j)$ yields the equations of motion for the rescaled system in spectral coordinates
\begin{align}
(\dot{\lambda_j^1}+\dot{\lambda_j^2}) &=0 \label{eq:NullModes}\\
 (\dot{\lambda_j^2}-\dot{\lambda_j^1})&= -(\lambda_j^2-\lambda_j^1)\sum \limits_{k=1}^{N}  (\lambda_k^2-\lambda_k^1)^2  \sin^2(2(\theta_k-\theta_j))+\frac{N}{2} (\lambda_j^2-\lambda_j^1)
 \label{eq:2x2RescaledTwistCouplingDynamics}\\
\dot{\theta_j}&=\frac{1}{4}\sum \limits_{k=1}^{N} (\lambda_k^2-\lambda_k^1)^2 \sin(4(\theta_k-\theta_j)).
\label{eq:2x2RescaledTwistEigen-angleDynamics}
\end{align}
To analyze the stability of the twist states we compute the Jacobian.  Recall that, due to the invariance under the map ${\bf M}_i \mapsto {\bf M}_i + \alpha_i {\bf I}$, the dynamics is trivial in the $\tr({\bf M}_i)$ directions. We can see from  Equation (\ref{eq:NullModes}) that the Jacobian has at least $N$ elements in the kernel.  What is more interesting is the remaining $2N$ degrees of freedom represented by $\theta_i, \Delta \lambda_i := \lambda_i^2 - \lambda_i^1$, which evolve non-trivially. Thus we need to analyze the $2N\times 2N$ matrix that results from the Equations \eqref{eq:2x2RescaledTwistCouplingDynamics} and \eqref{eq:2x2RescaledTwistEigen-angleDynamics}.
Defining $\Delta \lambda_j := (\lambda_j^2-\lambda_j^1)$, the equations of motion become
$$
\frac{d \theta_j}{dt}=g_j(\boldsymbol{\theta}, {\Delta \lambda}) \text{ and } \frac{d \Delta \lambda_j}{dt}=f_j(\boldsymbol{\theta},{\Delta \lambda}) ,
$$
where
\[
g_j(\boldsymbol{\theta}, {\Delta \lambda})=\frac{1}{4}\sum \limits_{k=1}^{N} (\Delta \lambda_k)^2 \sin(4(\theta_k-\theta_j))
\]
 and
\[
f_j(\boldsymbol{\theta},{\Delta \lambda}) =- \Delta \lambda_j \sum \limits_{k=1}^{N} (\Delta \lambda_k)^2 \sin^2(2(\theta_k -\theta_j))+\frac{N}{2} \Delta \lambda_j.
\]
Thus we find that the Jacobian ${\bf J}$ can be written in the block form
\begin{equation}
  {\bf J} = \left[\begin{array}{ccc}
                    0 & 0 & 0 \\
  0 & \frac{\partial g}{\partial \theta} & \frac{\partial g}{\partial \Delta \lambda} \\[\medskipamount]
   0 & \frac{\partial f}{\partial \theta} & \frac{\partial f}{\partial \Delta \lambda}\end{array}\right]
   \label{eq:BlockJacobian}
\end{equation}
where the partial derivatives of each non-zero block are of the form
\[
\frac{ \partial g_j}{\partial \theta_k}= \begin{cases} -\sum \limits_{k=1}^{N} (\Delta \lambda_k)^2 \cos(4(\theta_k-\theta_j)), & \mbox{if } j=k \\ \ (\Delta \lambda_k)^2 \cos(4(\theta_k-\theta_j)), & \mbox{if } j \neq k \end{cases},\]
\[
\frac{\partial g_j}{\partial \Delta \lambda_k}= \begin{cases} 0, & \mbox{if } j=k \\ \frac{1}{2} (\Delta \lambda_k) \sin(4(\theta_k -\theta_j)) & \mbox{if } j \neq k \end{cases}, \]
\begin{align*}
\frac{\partial f_j}{\partial \theta_k}&= \begin{cases} (\Delta \lambda_j)(\Delta \lambda_k)^2 \sum \limits_{k=1}^{N}  2 \sin(4(\theta_k-\theta_j)), & \mbox{if } j=k \\-(\Delta \lambda_j)(\Delta \lambda_k)^2 2 \sin(4(\theta_k-\theta_j)), & \mbox{if } j \neq k \end{cases} \\
&= \begin{cases} 0, & \mbox{if } j=k \\-(\Delta \lambda_j)(\Delta \lambda_k)^2 2 \sin(4(\theta_k-\theta_j)), & \mbox{if } j \neq k \end{cases},
\end{align*}
and
\begin{align*}
 \frac{\partial f_j}{\partial \Delta \lambda_k} &= \begin{cases}-\sum \limits_{k=1}^{N} (\Delta \lambda_k)^2 \sin^2(2(\theta_k-\theta_j))+\frac{N}{2}, & \mbox{if } j=k \\ -2(\Delta \lambda_j)(\Delta \lambda_k)^2 \sin^2(2(\theta_k-\theta_j)), & \mbox{if } j \neq k \end{cases} \\
&= \begin{cases}0, & \mbox{if } j=k \\ -2(\Delta \lambda_j)(\Delta \lambda_k)^2 \sin^2(2(\theta_k-\theta_j)), & \mbox{if } j \neq k \end{cases}.
\end{align*}

To compute the eigenvalues of ${\bf J}$, we take advantage of the structure of the individual blocks.  Using the evenness of the cosine function, the oddness of the sine and the equivalent forms of polar angles, we have that
\begin{itemize}
    \item $\frac{\partial g}{\partial \theta}$ and $\frac{\partial f}{\partial \Delta \lambda}$ are real symmetric matrices,
    \item $\frac{\partial g}{\partial \Delta \lambda}$ and $\frac{\partial f}{\partial \theta}$ are antisymmetric,
    \item $\frac{\partial g}{\partial \Delta \lambda}$ and $\frac{\partial f}{\partial \theta}$ are algebraically related by $\frac{\partial f}{\partial \theta}= 4\frac{\partial g}{\partial \Delta \lambda}^T$, and
    \item Each block is individually a circulant matrix.
    \end{itemize}

    Here we remind the reader of a standard fact about the spectrum of circulant matrices: see, for example, the text of Terras\cite{Terras} for details.
    \begin{lemma}
      Suppose that ${\bf A}$ is an $n \times n$  circulant matrix: in other words that ${\bf A}_{ij} = c_{(i-j) \mod n}$ for some vector ${\bf c} \in {\mathbb R}^n $. Then the eigenvectors are given by ${\bf v}^{(\alpha)}$ with ${\bf v}^{(\alpha)}_j = \exp(\frac{2 \pi i (j-1)\alpha}{n})$ and the eigenvalues are given by $\lambda ^{\alpha} = \sum_{j=1}^n c_j \exp(\frac{2 \pi i (j-1)\alpha}{n})$. In other words ${\bf A}$ is diagonalized by the discrete Fourier transform, and the eigenvalues are the discrete Fourier transform of the vector ${\bf c}$.
      \end{lemma}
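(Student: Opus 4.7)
The plan is a direct verification: substitute the proposed eigenvector into $\mathbf{A}\mathbf{v}^{(\alpha)}$, use the circulant structure to factor out $\mathbf{v}^{(\alpha)}$, and recognize the leftover scalar as the discrete Fourier transform of $\mathbf{c}$. Then a brief linear-independence argument upgrades this from ``eigenpair exists'' to ``DFT diagonalizes $\mathbf{A}$.''

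First I would compute, for each index $i \in \{1,\dots,n\}$,
\[
(\mathbf{A}\mathbf{v}^{(\alpha)})_i \;=\; \sum_{j=1}^{n} c_{(i-j)\bmod n}\,\exp\!\Bigl(\tfrac{2\pi i\,(j-1)\alpha}{n}\Bigr).
\]
Because $(i-j)\bmod n$ hits each residue in $\{0,1,\dots,n-1\}$ exactly once as $j$ ranges over $\{1,\dots,n\}$, the substitution $k=(i-j)\bmod n$ reindexes the sum without loss. Writing $j-1 \equiv (i-1)-k \pmod n$ and using periodicity of $\exp(2\pi i\,\cdot/n)$, the factor $\exp(2\pi i(i-1)\alpha/n)=\mathbf{v}^{(\alpha)}_i$ pulls cleanly out of the sum. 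What remains is a scalar depending only on $\alpha$ (not on $i$), which, after the trivial relabeling $\alpha \mapsto -\alpha \bmod n$ inside the DFT of $\mathbf{c}$, is exactly the stated eigenvalue $\lambda^{\alpha} = \sum_{j=1}^{n} c_j \exp(2\pi i(j-1)\alpha/n)$. This gives $\mathbf{A}\mathbf{v}^{(\alpha)} = \lambda^{\alpha}\mathbf{v}^{(\alpha)}$ for every $\alpha \in \{0,1,\dots,n-1\}$.

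Second, I would note that the matrix whose columns are $\mathbf{v}^{(0)},\ldots,\mathbf{v}^{(n-1)}$ is the standard DFT matrix $F$ with $F_{j\alpha}=\exp(2\pi i(j-1)\alpha/n)$. The classical Vandermonde identity (applied to the distinct $n$-th roots of unity) shows $\det F \neq 0$, so the $\mathbf{v}^{(\alpha)}$ form a basis of $\mathbb{C}^n$. Combined with the eigenpair computation above, this yields $F^{-1}\mathbf{A}F = \mathrm{diag}(\lambda^0,\dots,\lambda^{n-1})$, which is the desired diagonalization.

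The main (mild) obstacle is simply keeping the mod-$n$ bookkeeping straight in the change-of-summation variable, together with deciding a sign convention for the DFT so that the eigenvalue formula matches the one stated. Neither is a genuine difficulty: once the factor $\mathbf{v}^{(\alpha)}_i$ is pulled out, the remaining sum is manifestly a DFT of $\mathbf{c}$ and can be identified with $\lambda^{\alpha}$ up to this convention.
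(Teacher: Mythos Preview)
The paper does not actually prove this lemma: it is stated as a ``standard fact about the spectrum of circulant matrices'' with a reference to Terras, and no argument is given. Your direct verification is correct and is precisely the textbook proof one finds in such references, so there is nothing to compare.

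One small remark on the sign convention you flag at the end. With the paper's convention $A_{ij}=c_{(i-j)\bmod n}$ and $v^{(\alpha)}_j=\omega^{(j-1)\alpha}$ (where $\omega=e^{2\pi i/n}$), the computation you outline gives eigenvalue $\sum_{k} c_k\,\omega^{-k\alpha}$, whereas the lemma as written asserts $\sum_k c_k\,\omega^{+k\alpha}$. You are right that this is harmless at the level of the \emph{set} of eigenpairs, since the relabeling $\alpha\mapsto n-\alpha$ swaps the two; but strictly speaking the eigenvalue attached to the specific vector $v^{(\alpha)}$ carries the minus sign. The paper's own indexing of $\mathbf c$ is already slightly loose (the subscript $(i-j)\bmod n$ lies in $\{0,\dots,n-1\}$ while the eigenvalue sum runs over $c_1,\dots,c_n$), so this is a convention issue in the statement rather than a defect in your argument.
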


      The fact that the Jacobian is of block-circulant form allows us to compute the eigenvalues of the Jacobian, leading to the following stability theorem.

\begin{theorem} Any collection $\{{\bf M}_1, {\bf M}_2, ..., {\bf M}_N\}$ of $2\times2$ matrices in a twist state (Definition \ref{def:twiststate}) is an unstable orbit of Equation \eqref{eq:MatrixMoto}. More precisely, after re-scaling \eqref{eq:TwistDeltaLambdaRescale} for the algebraic decay in the eigenvalue difference $\Delta \lambda_j$, the Jacobian \eqref{eq:BlockJacobian} evaluated at a twist state has the following eigenvalues
  \begin{equation}
    \sigma(J) = \left\{ \begin{array}{c c}-N & \text{multiplicity} ~~1 \\
                          N & \text{multiplicity} ~~2\\
                        0 & \text{multiplicity} ~~3N-3 \end{array}.\right.
    \end{equation}
\end{theorem}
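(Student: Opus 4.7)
The plan is to exploit two structural features of $\mathbf{J}$ to reduce the spectral problem to that of three explicit $2\times 2$ matrices. First, the $N$ trace directions in the top-left zero block of (\ref{eq:BlockJacobian}) already furnish $N$ zero eigenvalues, so I need only diagonalize the $2N\times 2N$ subblock with entries $A=\partial g/\partial\theta$, $B=\partial g/\partial\Delta\lambda$, $C=\partial f/\partial\theta$, $D=\partial f/\partial\Delta\lambda$. At a twist state with $\theta_j=\pi n(j-1)/(2N)$ and $\Delta\lambda_j=1$ (the positive root of the rescaled fixed-point condition $\Delta\lambda^2=1$), each entry of every block depends only on $(k-j)\bmod N$, so all four blocks are circulant. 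By the cited lemma they are simultaneously diagonalized by the discrete Fourier transform; applying it in both the $\theta$ and $\Delta\lambda$ coordinates and reordering decomposes the $2N$-dimensional subblock into $N$ independent $2\times 2$ blocks
\[
\hat{\mathbf{J}}^{(\alpha)}=\begin{pmatrix}\hat A_\alpha & \hat B_\alpha\\ \hat C_\alpha & \hat D_\alpha\end{pmatrix},\qquad \alpha=0,1,\ldots,N-1,
\]
where $\hat A_\alpha,\hat B_\alpha,\hat C_\alpha,\hat D_\alpha$ are the $\alpha$-th discrete Fourier coefficients of the respective generating sequences.

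Second, those generating sequences are built from $\cos(2\pi nm/N)$, $\sin(2\pi nm/N)$, or $\sin^2(\pi nm/N)=\tfrac{1}{2}(1-\cos(2\pi nm/N))$. Expanding these into $e^{\pm 2\pi inm/N}$ and applying the orthogonality identity $\sum_{m=0}^{N-1}e^{2\pi ikm/N}=N\delta_{k\equiv 0\,(\bmod N)}$ shows that the Fourier coefficients are sparse: $\hat A_\alpha,\hat B_\alpha,\hat C_\alpha$ are supported on $\alpha\in\{n,-n\}$ and $\hat D_\alpha$ is supported on $\alpha\in\{0,n,-n\}$. A short evaluation gives $\hat A_{\pm n}=N/2$, $\hat D_0=-N$, $\hat D_{\pm n}=N/2$, $\hat B_n=Ni/4=-\hat B_{-n}$, and $\hat C_\alpha=-4\hat B_\alpha$, with every other $\hat X_\alpha=0$. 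Each of the $N-3$ modes $\alpha\notin\{0,n,-n\}$ therefore produces the zero $2\times 2$ block, contributing $2(N-3)$ zero eigenvalues.

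For the three exceptional modes the eigenvalues are immediate. The $\alpha=0$ block is $\mathrm{diag}(0,-N)$, contributing $0$ and $-N$. Each of the $\alpha=\pm n$ blocks has equal diagonals $N/2$ and off-diagonal product $\hat B_\alpha\hat C_\alpha=N^2/4$, so its characteristic polynomial is $\lambda^2-N\lambda=0$, giving eigenvalues $0$ and $N$. (If $N$ is even and $n=N/2$ the modes $\pm n$ collapse to a single mode, but then the block degenerates to $\mathrm{diag}(N,N)$ and still contributes $N$ with multiplicity two.) Assembling: the nontrivial $2N\times 2N$ subblock yields $-N$ once, $N$ twice, and $2N-3$ zeros, while the trivial $N\times N$ subblock yields $N$ further zeros, reproducing the advertised spectrum. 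Since $N>0$ is an eigenvalue, the twist state is unstable as an orbit of (\ref{eq:MatrixMoto}).

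The only genuine obstacle is the algebra of the second step: honestly transforming the four circulant generators and verifying that each concentrates on only $\{0,\pm n\}$. Mislabeling a $\sin$ or $\cos$, or an index shift, can spread Fourier support across additional modes and destroy the clean three-block reduction; beyond that point everything is elementary $2\times 2$ bookkeeping, driven by the fact that the four blocks are, up to constants, different quadratic expressions in the same two exponentials $e^{\pm 2\pi inm/N}$.
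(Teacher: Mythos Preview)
Your proposal is correct and follows essentially the same route as the paper: reduce to the $2N\times 2N$ subblock, use the circulant structure so that the discrete Fourier transform simultaneously diagonalises all four blocks, and then read off the spectrum from the resulting $2\times 2$ problems at the few modes with nonzero Fourier coefficients. Your bookkeeping is in fact slightly tidier than the paper's in two respects: you carry the winding number $n$ throughout (the paper's appendix tacitly sets $n=1$, so the special Fourier modes are written as $\alpha=1,N-1$ rather than $\alpha=\pm n$), and you explicitly treat the degenerate case $N$ even with $n=N/2$, where $\pm n$ collapse to a single mode and the $2\times 2$ block becomes $\mathrm{diag}(N,N)$; the paper does not mention this case.
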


\begin{proof}
  Given the block-circulant nature of the matrix the theorem follows from fairly straightforward computations.
  First note that since the matrix is block circulant the individual blocks commute (all circulant matrices have a common eigenbasis) and the eigenvectors may be assumed to be a direct sum of a vector in ${\mathbb R}^2$ with the eigenvector of a circulant matrix
  \begin{equation}
    {\bf x}^{\alpha} = \left[\begin{array}{c} 0 \\ \kappa \\ \beta \end{array}\right] \oplus {\bf v}^{(\alpha)} =  \left[\begin{array}{c} {\bf 0} \\ \kappa {\bf v}^{(\alpha)} \\ \beta {\bf v}^{(\alpha)} \end{array}\right]
\label{eq:circulantEvector}
  \end{equation}
      where, as previously, ${\bf v}^{(\alpha)}_j = \exp\left(\frac{2 \pi i (j-1)\alpha}{n}\right)$.
Let $a_\alpha$ denote the eigenvalue for block $\frac{\partial g}{\partial \theta}$ associated with the eigenvector ${\bf v}^{(\alpha)}$, $b_\alpha$ for block $\frac{\partial g}{\partial \Delta \lambda}$, and $c_\alpha$ for block $\frac{\partial f}{\partial \Delta \lambda}$.  Note that the eigenvalues of the fourth block are $4\bar{b}_\alpha$, by the algebraic relationship of the off diagonal blocks.  Then, focusing solely on the non-zero block of ${\bf J}$, we have
\[
\begin{bmatrix}\frac{\partial g}{\partial \theta} & \frac{\partial g}{\partial \Delta \lambda} \\[\medskipamount]
   4\frac{\partial g}{\partial \Delta \lambda}^T & \frac{\partial f}{\partial \Delta \lambda} \end{bmatrix} \begin{bmatrix} \kappa {\bf v}^{(\alpha)} \\ \beta {\bf v}^{(\alpha)} \end{bmatrix}
=
\begin{bmatrix} \kappa \frac{\partial g}{\partial \theta}{\bf v}^{(\alpha)} +  \beta \frac{\partial g}{\partial \Delta \lambda}{\bf v}^{(\alpha)} \\
4\kappa\frac{\partial g}{\partial \Delta \lambda}^T  {\bf v}^{(\alpha)} +  \beta \frac{\partial f}{\partial \Delta \lambda} {\bf v}^{(\alpha)} \end{bmatrix} \\
=
\begin{bmatrix} \kappa a_\alpha {\bf v}^{(\alpha)} +  \beta b_\alpha {\bf v}^{(\alpha)} \\ 4 \kappa \bar{b}_\alpha  {\bf v}^{(\alpha)} +  \beta c_\alpha {\bf v}^{(\alpha)} \end{bmatrix}  \\
=
\begin{bmatrix} (\kappa a_\alpha  +  \beta b_\alpha) {\bf v}^{(\alpha)} \\ (4 \kappa \bar{b}_\alpha  +  \beta c_\alpha) {\bf v}^{(\alpha)} \end{bmatrix} \\
\]
or equivalently
\begin{equation}
\begin{bmatrix}
a_\alpha  &  b_\alpha \\
4 \bar{b}_\alpha  &   c_\alpha
\end{bmatrix}
\begin{bmatrix}
    \kappa \\
    \beta
\end{bmatrix}
=
\lambda \begin{bmatrix} \kappa \\ \beta \end{bmatrix}.
\label{eq:ReducedTwistJacobian}
\end{equation}
We see that for each $\alpha$, our larger eigenvalue problem reduces to the associated $2\times 2$ eigenvalue problem for the matrix defined by each block's individual eigenvalue.  Doing this for all $\alpha$ will yield $2N$ eigenvalues for the Jacobian matrix ${\bf J}$. A straightforward though somewhat lengthy computation -- see Appendix \ref{app:CirculantBlockLambdas} for details -- gives the following expressions for $a_{\alpha},b_{\alpha}, c_\alpha$
\begin{align*}
  & a_\alpha = \left\{ \begin{array}{rl} N/2 & \text{for } \alpha = 1, N-1 \\
                                    0 & \text{for } \alpha \neq 1, N-1 \end{array} \right. & \\
  & b_\alpha = \left\{ \begin{array}{rl} -iN/4 & \text{for }\alpha = 1 \\
                                    iN/4 & \text{for }\alpha = N-1 \\
                                    0 & \text{for } \alpha \neq 1, N-1 \end{array} \right. & \\
  & c_\alpha = \left\{ \begin{array}{rl} -N & \text{for } \alpha = 0 \\
                                    N/2 & \text{for } \alpha = 1, N-1 \\
                                     0 & \text{for } \alpha \neq 0,1, N-1 \end{array} \right. &
\end{align*}

To finish our analysis, we return to Equation \eqref{eq:ReducedTwistJacobian} and the coefficient matrix $
\begin{bmatrix}
a_{\alpha}  &  b_{\alpha} \\
4 \bar{b}_{\alpha}  &   c_{\alpha}
\end{bmatrix}
$.
For each $\alpha$ the above matrix gives two eigenvalues; the spectrum of the linearization being the union over all $\alpha \in \{0,1,\ldots,N-1\}.$ The only values of $\alpha$ that are important are $\alpha=0,1,N-1$, since the remaining $\alpha$ values give the zero matrix.
When $\alpha=0$ the coefficient matrix is $\begin{bmatrix} 0 & 0 \\ 0 & -N \end{bmatrix}$ with eigenvalues $0$ and $-N$ and associated eigenvectors $[{\bf 0}, {\bf v}^{(0)}, {\bf 0} ]^T$ and $[{\bf 0}, {\bf 0}, {\bf v}^{(0)} ]^T$ of ${\bf J}$, respectively.  Note that ${\bf v}^{(0)}$ is the vector $[1,1,1,\ldots,1]^T$. When $\alpha=1$ the coefficient matrix is $\begin{bmatrix} N/2 & -Ni/4 \\ Ni & N/2 \end{bmatrix}$ with eigenvalues  $0$ and $N$.  The associated eigenvectors of  ${\bf J}$ are $[{\bf 0}, {\bf v}^{(1)}, -2i {\bf v}^{(1)} ]^T$ and $[ {\bf 0}, (-Ni/4){\bf v}^{(1)}, (3N/2){\bf v}^{(1)} ]^T$, respectively.  Finally for $\alpha = N-1$, we get $\begin{bmatrix} N/2 & Ni/4 \\ -Ni & N/2 \end{bmatrix}$ which again has eigenvalues $0$ and $N$.  For all other $\alpha$ the coefficient matrix is the zero matrix ${\bf 0}$.  This yields two 0 eigenvalues for each $\alpha \notin \{0,1,N-1\}$.  Recall that we earlier saw that there are an additional $N$ zero eigenvalues corresponding to the coordinates $\lambda_i^1 + \lambda_i^2 = \tr({\bf M}_i).$ Thus the $3N$ eigenvalues of ${\bf J}$ are given by
\[
\{-N, N \text{ (multiplicity }2), 0 \text{ (multiplicity } 3N-3) \}.
\]
\end{proof}

Figures \ref{fig:TwistStable} -- \ref{fig:TwistPerturbedRotation} demonstrate the dynamics of a twist state and the
instability of a twist state \eqref{def:twiststate} to perturbation.  We look at three collections of matrices; the first with the exact initial conditions for a twist state, the second being an example where the eigenvalue difference $\Delta \lambda$ is not constant throughout, and the third where the rotation angles are not quite equally spaced.

For the collection of matrices in Figure \ref{fig:TwistStable}, the eigenvalue frames are all $\pi/6$ apart and the eigenvalue difference is $\Delta \lambda = 2$.  We numerically integrate the matrix Kuramoto flow with initial conditions satisfying the conditions of Definition \ref{def:twiststate} to study the dynamics.  Figure \ref{fig:TwistStable} demonstrates there is an exact solution where the eigenframe does not change but where the difference in the eigenvalues decays algebraically. The left-most plot depicts the three eigenframes and associated eigenvalues. For each matrix we plot the eigenvectors in the $x-y$ plane, scaled so that the length is the same as the magnitude of the associated eigenvalue. One of the matrices has eigenvalues $\lambda_1=5$ and $\lambda_2=3$ with associated eigenvectors $(1,0)$ and $(0,1)$. This is represented in the
plot by a line segment of length $5$ in the $(1,0)$ direction and a segment of length $3$ in the $(0,1)$ direction. The other two matrices have eigenvalues $\lambda_1=4,\lambda_2=2$ and $\lambda_1=1,\lambda_2=3$ with associated eigenvectors $v_1=[\cos\frac{\pi}{6},-\sin\frac{\pi}{6}]^t$ and $v_2=[\sin\frac{\pi}{6},\cos\frac{\pi}{6}]^t$ and $v_1=[\cos\frac{\pi}{3},-\sin\frac{\pi}{3}]^t$ and $v_2=[\sin\frac{\pi}{3},\cos\frac{\pi}{3}]^t$ respectively.
The next two frames show the solutions at times $t=1.0$ and $t=8.0$ respectively. It is apparent that the eigenvectors have not changed, but that the eigenvalue difference for each matrix has converged to zero. In the third frame the axes are of approximately equal length indicating that each matrix is close to a multiple of the identity.
This clearly shows the dynamics on the twist manifold.

Figure \ref{fig:NotTwistDeltaLambdasDifferent} demonstrates an example where the eigenvalue difference is not the same for all three matrices.  Again the rotational difference is $\pi/6$, but two of the matrices have a $\Delta \lambda =2$ and one has $\Delta \lambda =3$.  The initial condition here is similar looking, but the dynamics is very different. With these perturbed initial conditions the eigenframes rotate and quickly align, dominated by the eigenframe with the largest eigenvalues. Note that the time-scale is much shorter here than in the previous example: the three plots depict times $t=0,0.4,1.0$, as compared with $t=0,1.0,8.0$ in the previous figure. This is because the
instability is exponential, while the decay of the non-identity portions of the matrices on the twist manifold is algebraic. Figure \ref{fig:NotTwistDeltaLambdasSlightlyDifferent} shows a similar experiment, where the variation of $\Delta \lambda_i$ is much smaller: the first and third matrices have $\Delta \lambda = 2$ while the second matrix has $\Delta\lambda=2.01$. We see a similar phenomenon as in the prior figure, though on a somewhat longer time scale. The eigenframes rotate in order to align.

Figure \ref{fig:TwistPerturbedRotation} shows one final  experiment to illustrate the instability of the twist state.
In this example the eigenvalue differences are again $2$ for each matrix, as required for a twist state, but we have perturbed one of the angles slightly.  The first and third eigenframes are at angle $0$ and $\frac{\pi}{3}$ but the  second eigenframe is at angle $11\pi/60$ off the axis.  Again we see that the eigenframes do not remain fixed, but rather they rapidly rotate to coincide.

\begin{figure}[htp]
\centering
\includegraphics[width=0.32\textwidth]{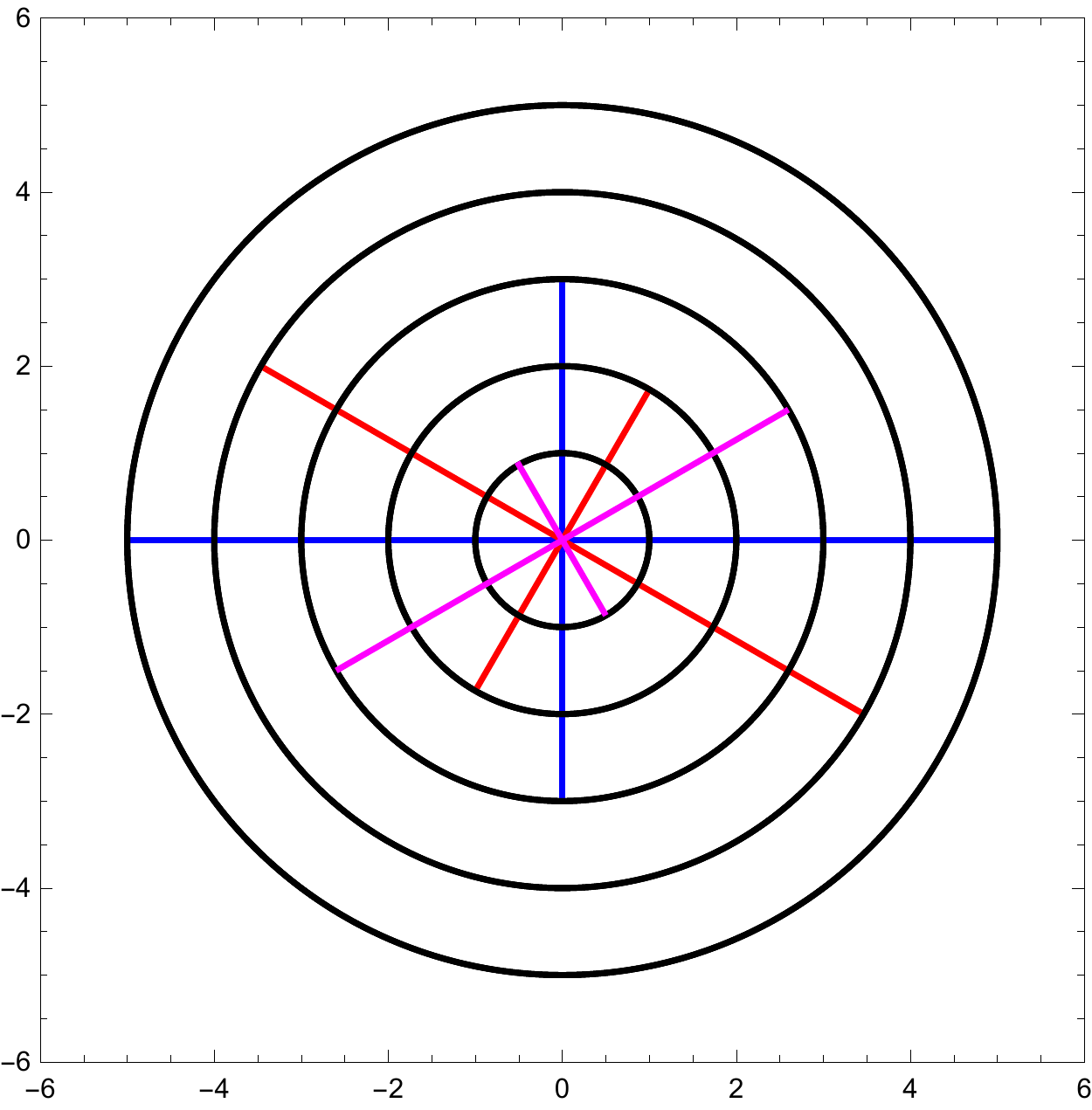}
\includegraphics[width=0.32\textwidth]{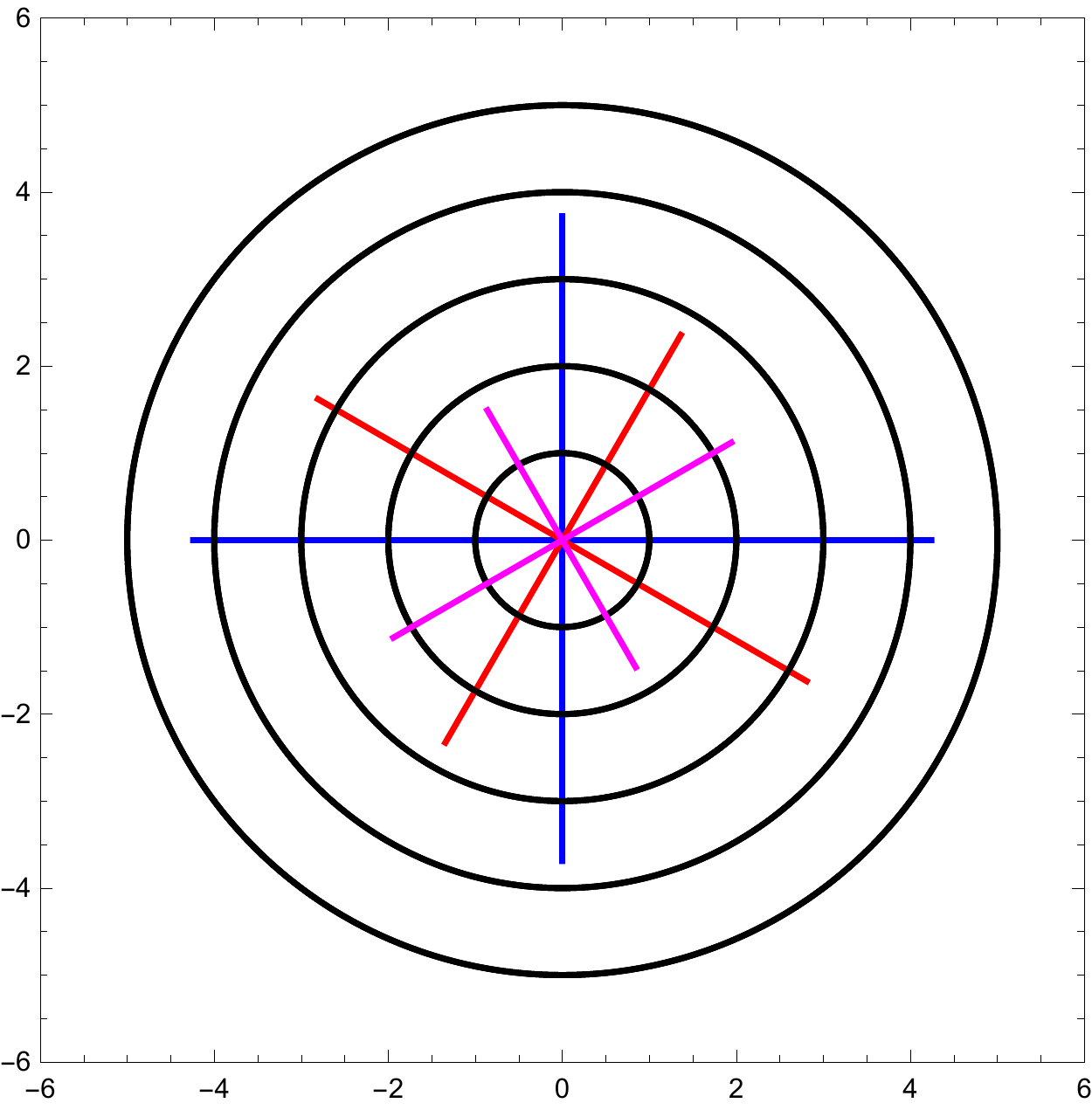}
\includegraphics[width=0.32\textwidth]{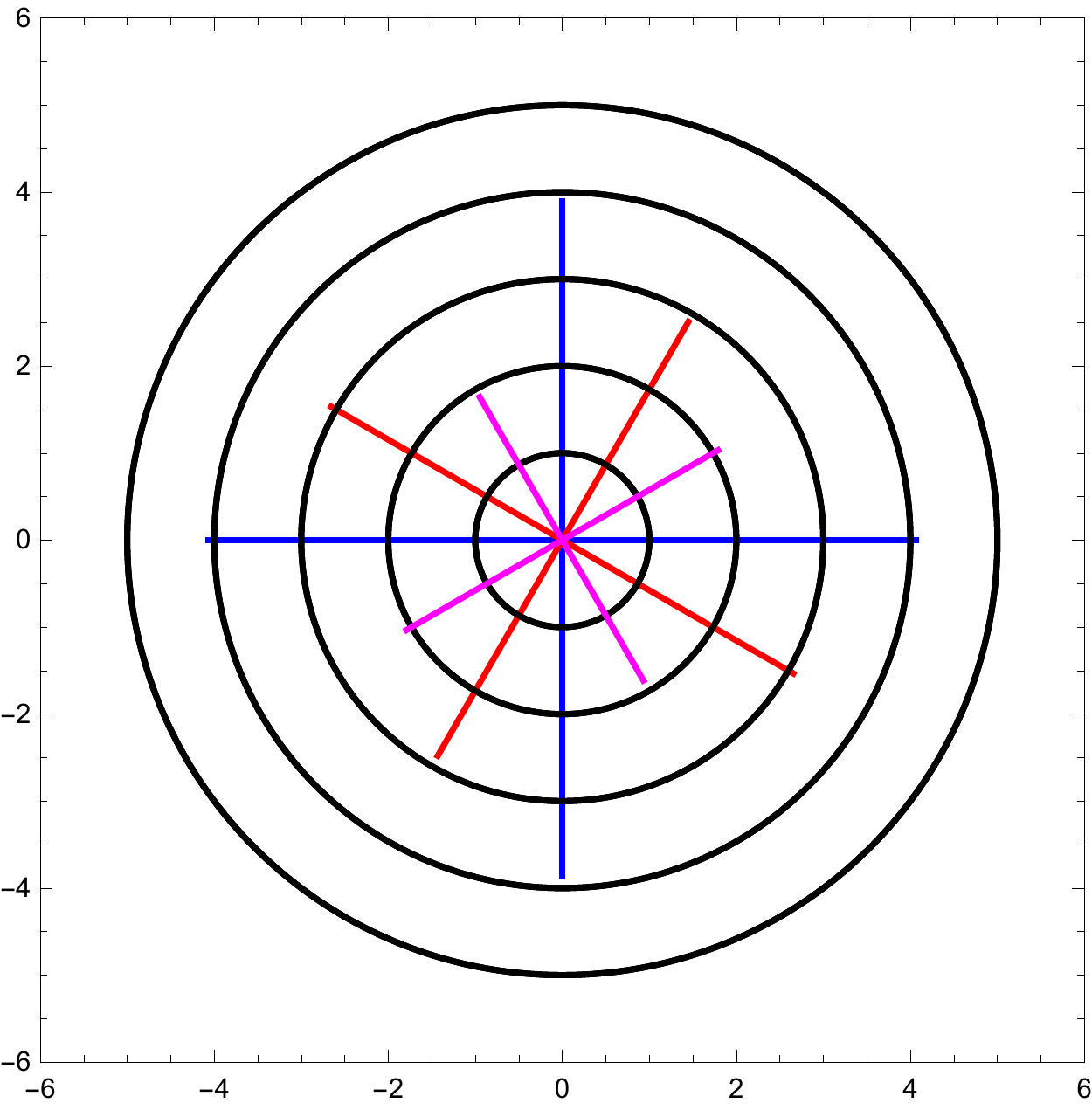}
\caption[]{This figure demonstrates the behavior of a twist state fixed point. The example begins with the initial conditions ${\bf M}_1(0) =$ $\begin{bmatrix} 5 & 0 \\ 0 & 3 \end{bmatrix}$, ${\bf M}_2(0)= {\bf R}(\frac{\pi}{6}) \begin{bmatrix} 4 & 0 \\ 0 & 2 \end{bmatrix} {\bf R}(- \frac{\pi}{6})$, and ${\bf M}_3(0)= {\bf R}(\frac{\pi}{3})\begin{bmatrix} 1 & 0 \\ 0 & 3 \end{bmatrix} {\bf R}(- \frac{\pi}{3})$.   We see that the orientation of the frames is fixed, but that the individual eigenvalues decay so that their difference is zero.  The snapshots of the evolution are at times $t=0,1,8$.}
\label{fig:TwistStable}
\end{figure}

\begin{figure}[htp]
\centering
\includegraphics[width=0.32\textwidth]{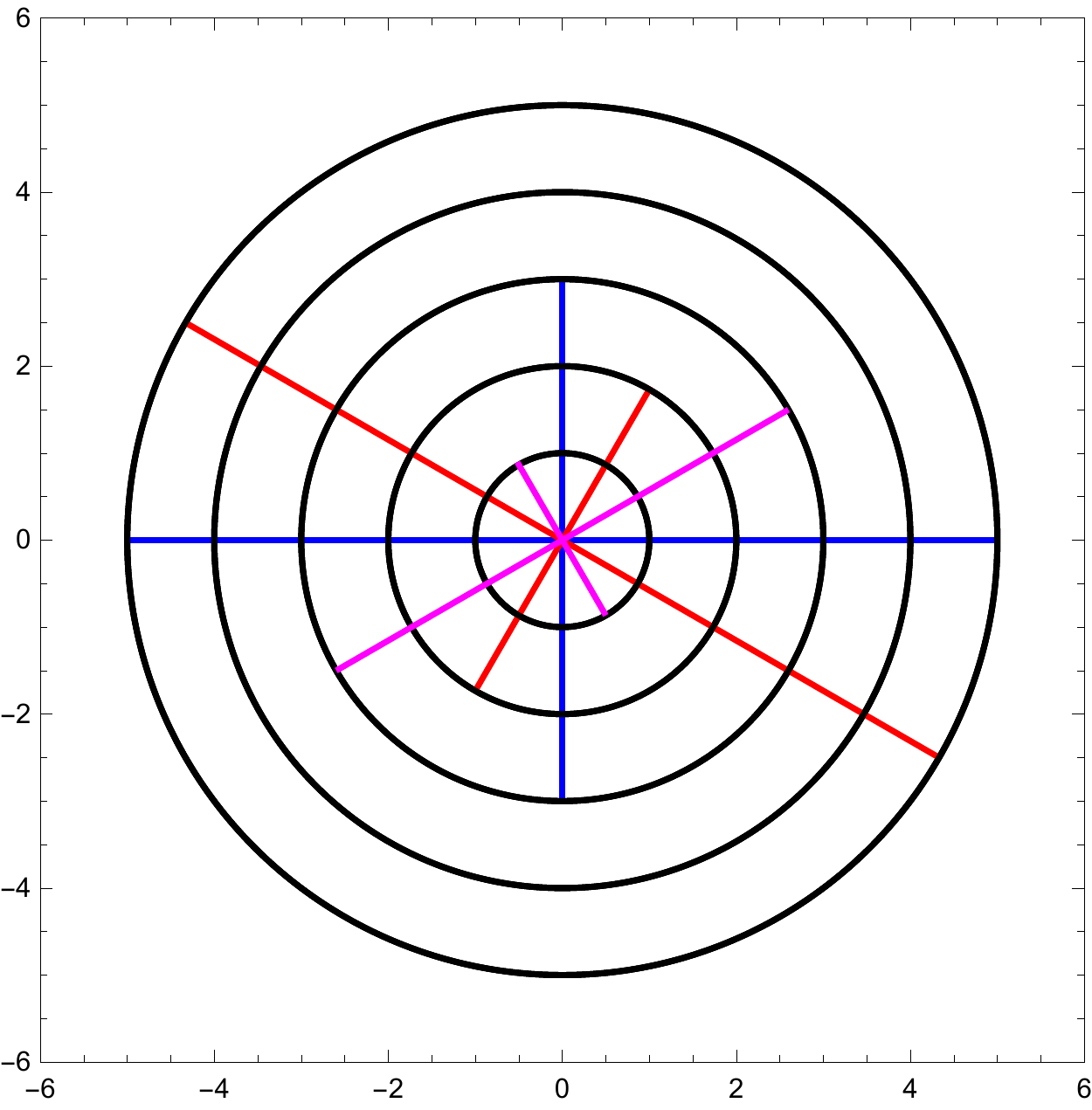}
\includegraphics[width=0.32\textwidth]{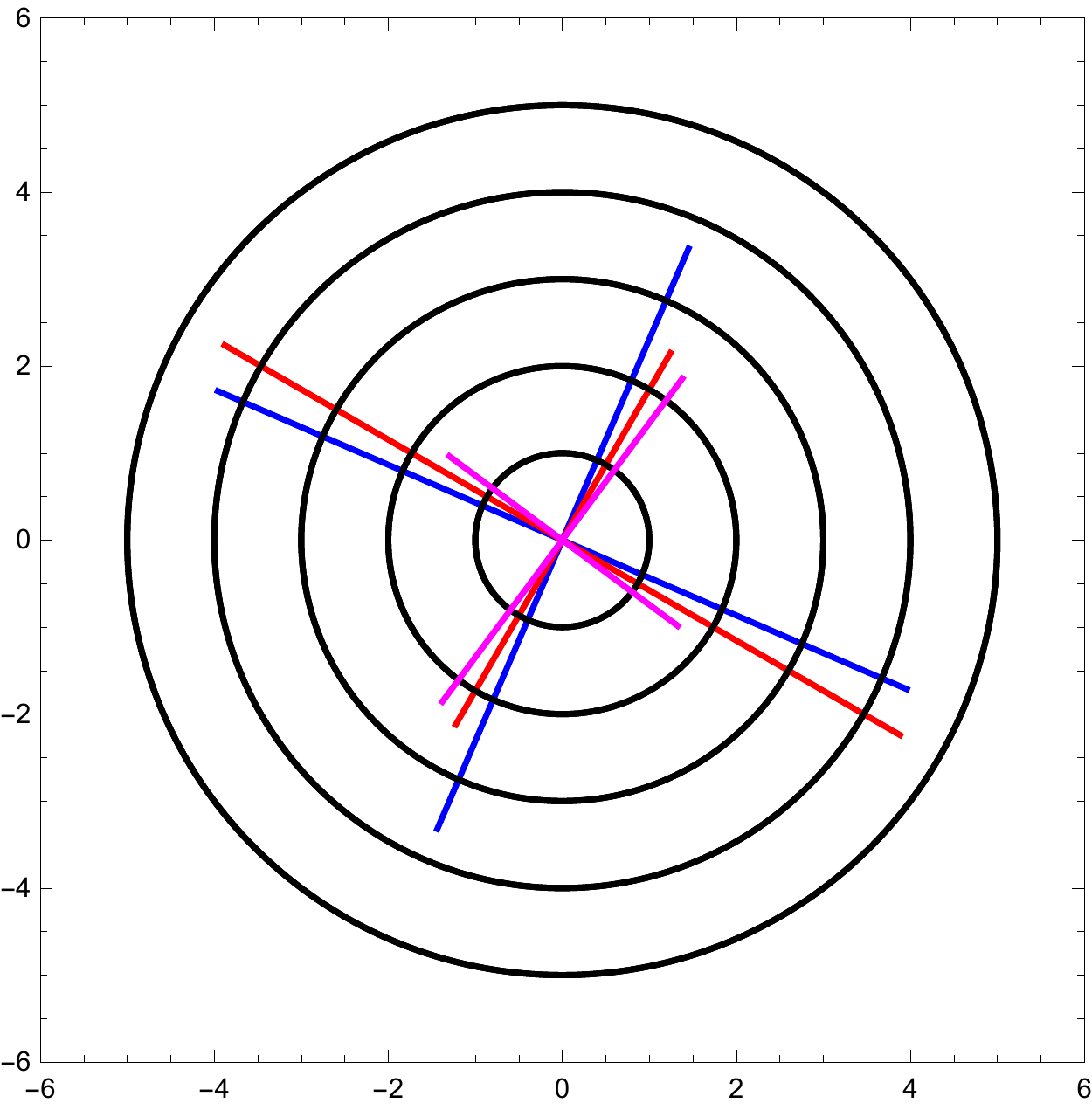}
\includegraphics[width=0.32\textwidth]{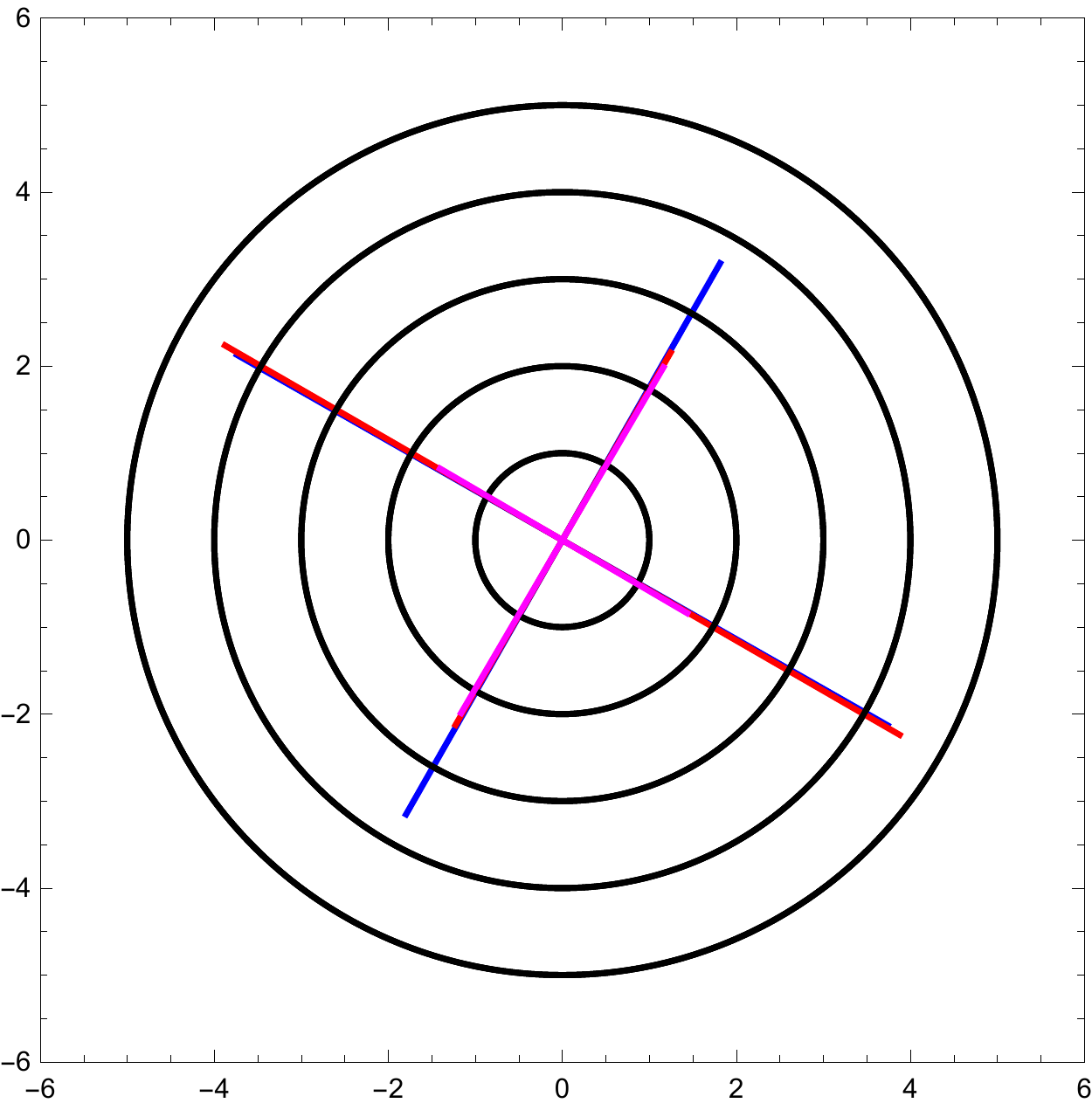}
\caption[]{This figure demonstrates the behavior when the three eigenframes are in a twist configuration (evenly space by $\pi/6$), but the initial eigenvalue difference $\Delta \lambda$ is not constant at the start.   The example begins with the initial conditions ${\bf M}_1(0) =$ $\begin{bmatrix} 5 & 0 \\ 0 & 3 \end{bmatrix}$, ${\bf M}_2(0)= {\bf R}(\frac{\pi}{6}) \begin{bmatrix} 5 & 0 \\ 0 & 2 \end{bmatrix} {\bf R}(- \frac{\pi }{6})$, and ${\bf M}_3(0)= {\bf R}(\frac{\pi}{3})\begin{bmatrix} 1 & 0 \\ 0 & 3 \end{bmatrix} {\bf R}(- \frac{\pi}{3}).$  We see that the system is unstable and evolves to aligning the eigenframes. The snapshots of the evolution are at times $t=0,0.4,1$. }
\label{fig:NotTwistDeltaLambdasDifferent}
\end{figure}

\begin{figure}[htp]
\centering
\includegraphics[width=0.32\textwidth]{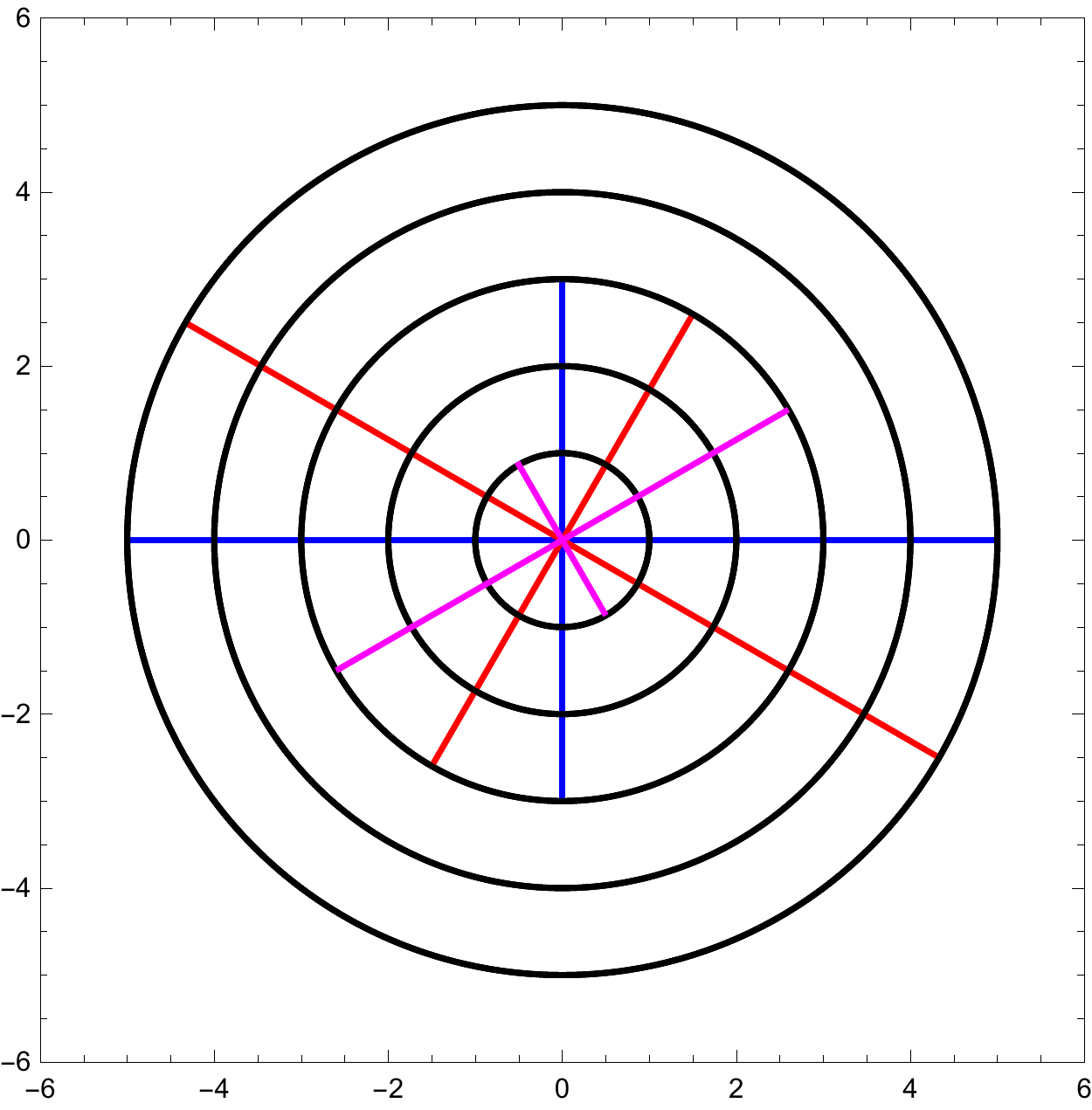}
\includegraphics[width=0.32\textwidth]{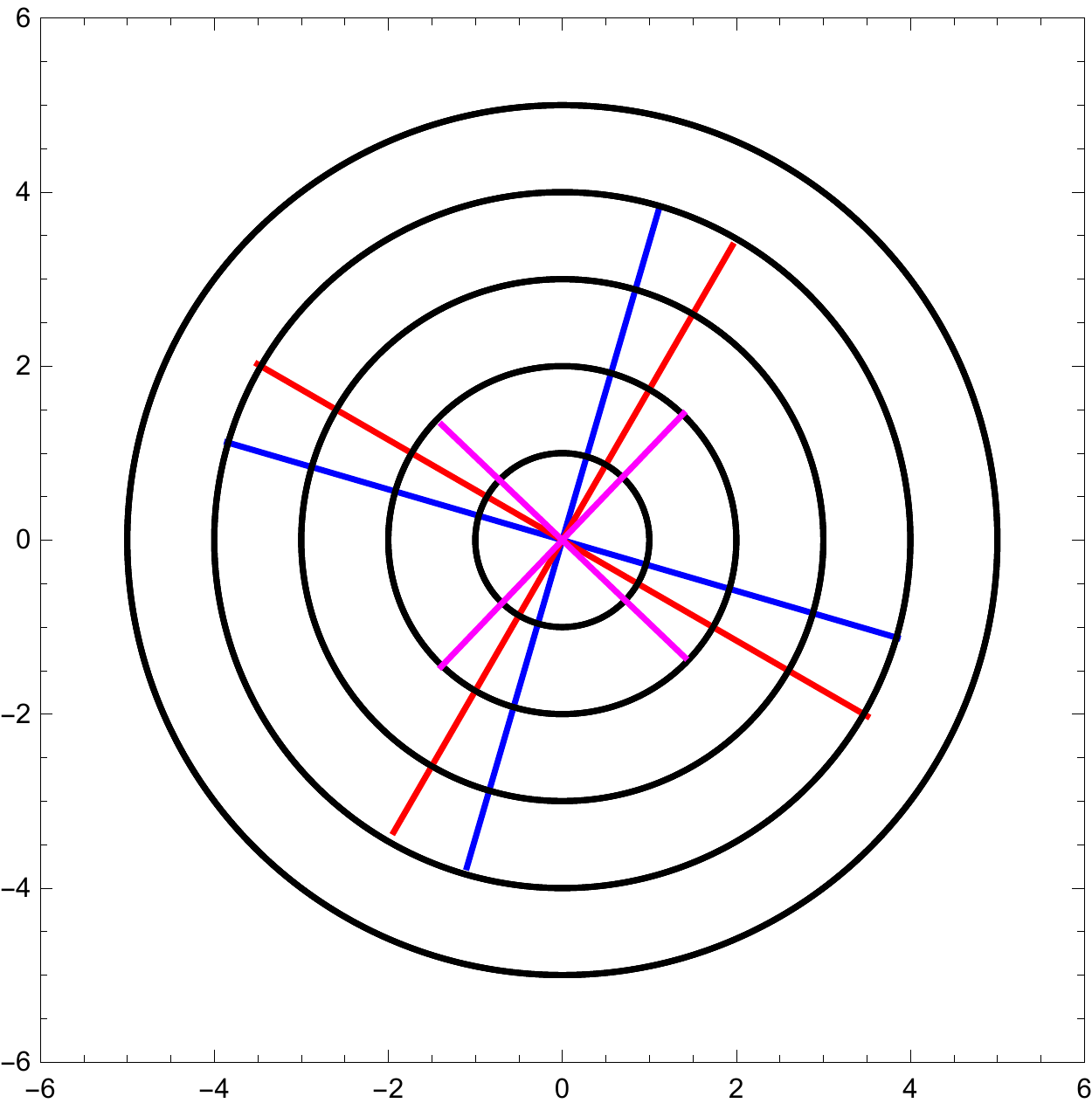}
\includegraphics[width=0.32\textwidth]{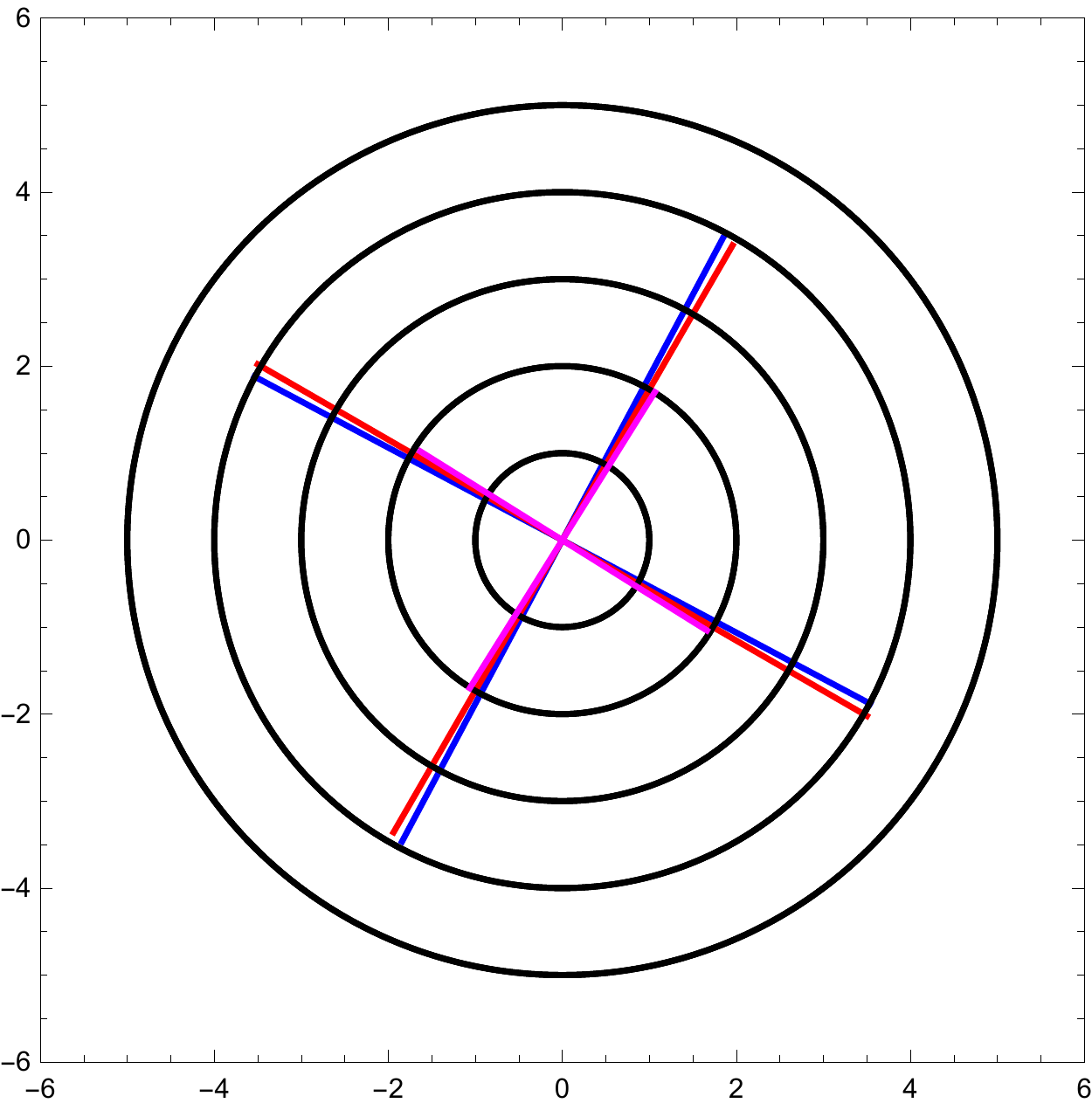}
\caption[]{This figure demonstrates the behavior when the three eigenframes are in a twist configuration (evenly space by $\pi/6$), but the initial eigenvalue difference $\Delta \lambda$ is not constant at the start.   The example begins with the initial conditions ${\bf M}_1(0) =$ $\begin{bmatrix} 5 & 0 \\ 0 & 3 \end{bmatrix}$, ${\bf M}_2(0)= {\bf R}(\frac{\pi}{6}) \begin{bmatrix} 5 & 0 \\ 0 & 2.99 \end{bmatrix} {\bf R}(- \frac{\pi }{6})$, and ${\bf M}_3(0)= {\bf R}(\frac{\pi}{3})\begin{bmatrix} 1 & 0 \\ 0 & 3 \end{bmatrix} {\bf R}(- \frac{\pi}{3}).$  We see that the system is unstable and evolves to aligning the eigenframes. The snapshots of the evolution are at times $t=0,30,80$.  }
\label{fig:NotTwistDeltaLambdasSlightlyDifferent}
\end{figure}

\begin{figure}[htp]
\centering
\includegraphics[width=0.32\textwidth]{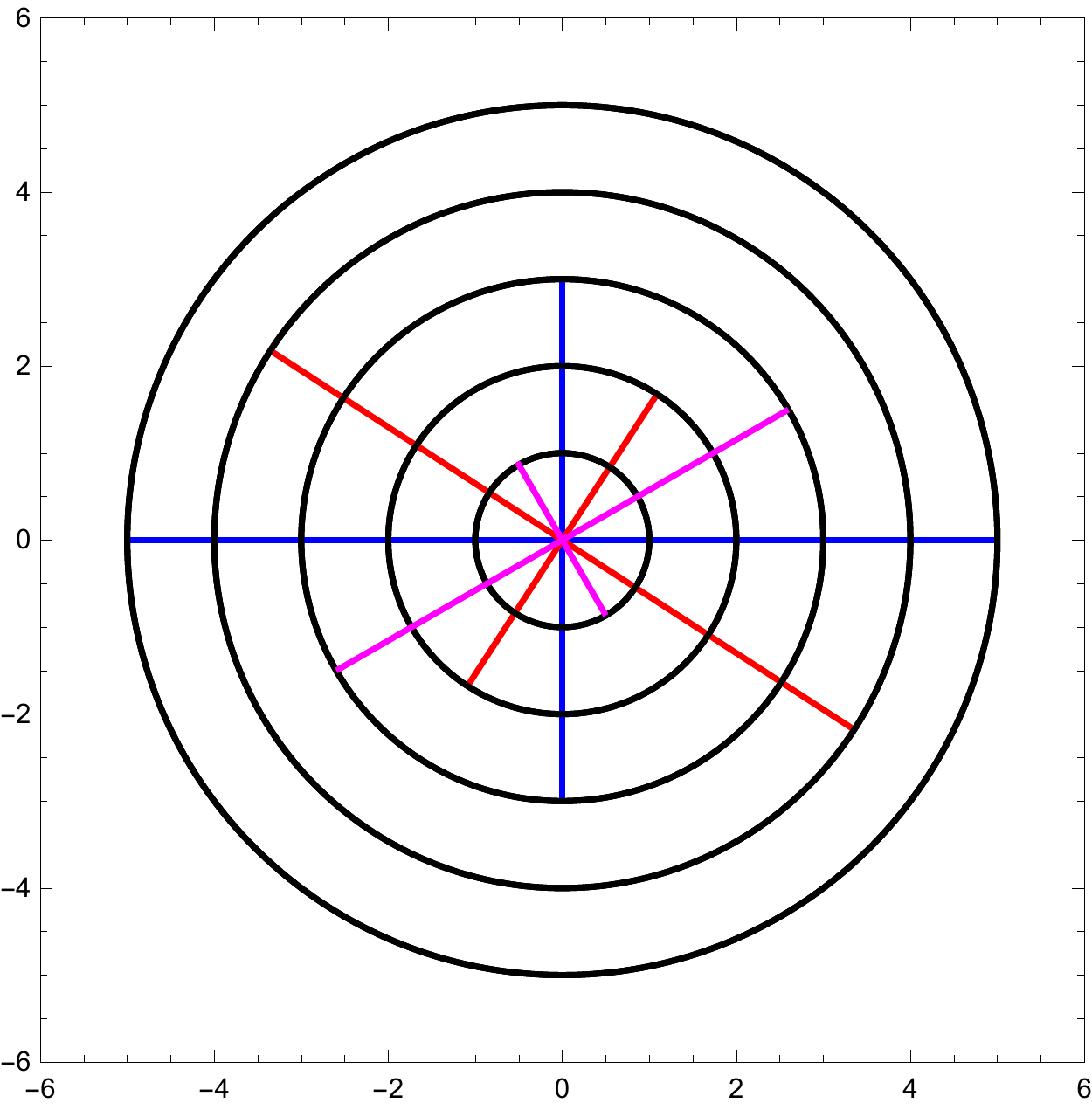}
\includegraphics[width=0.32\textwidth]{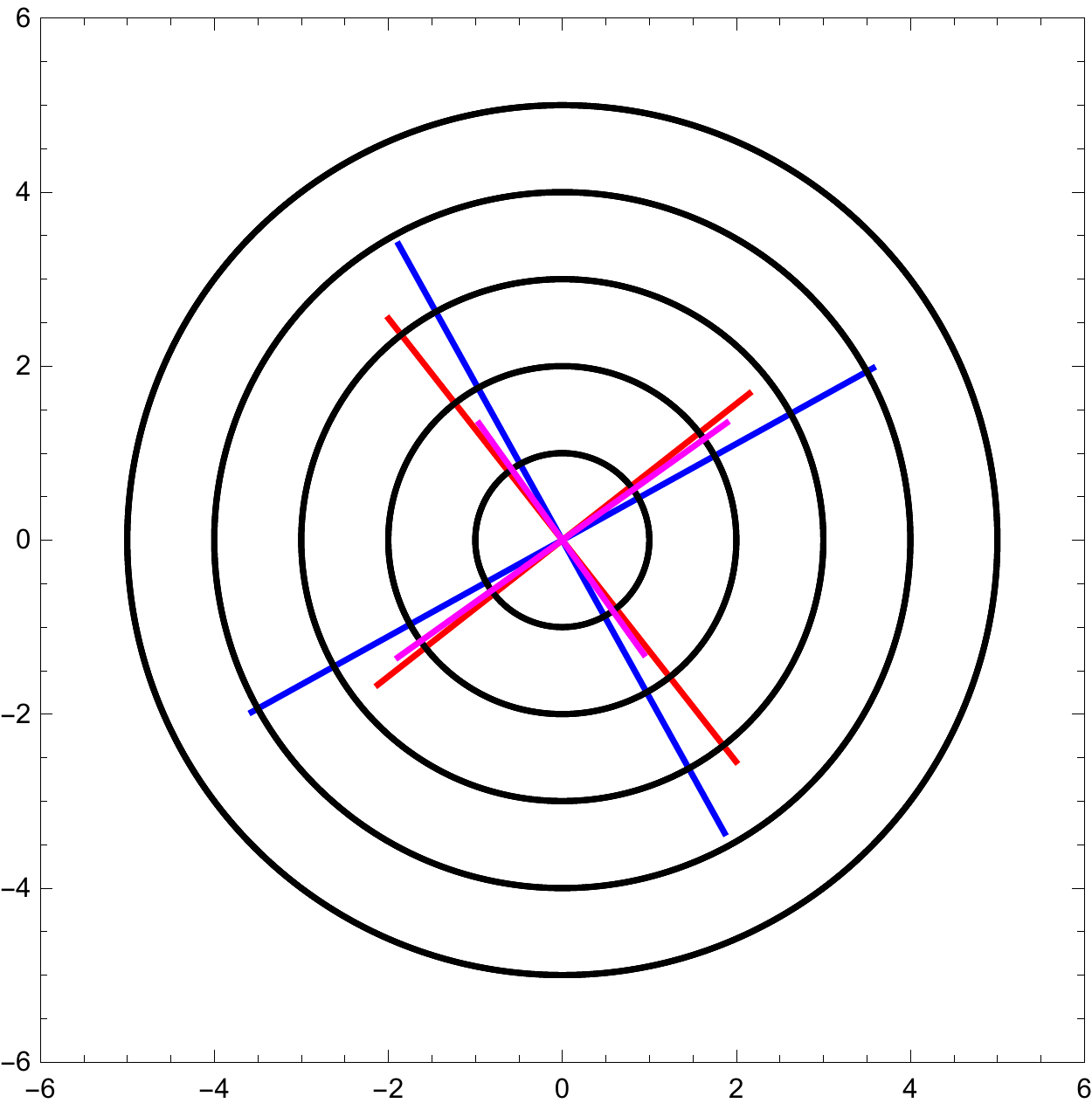}
\includegraphics[width=0.32\textwidth]{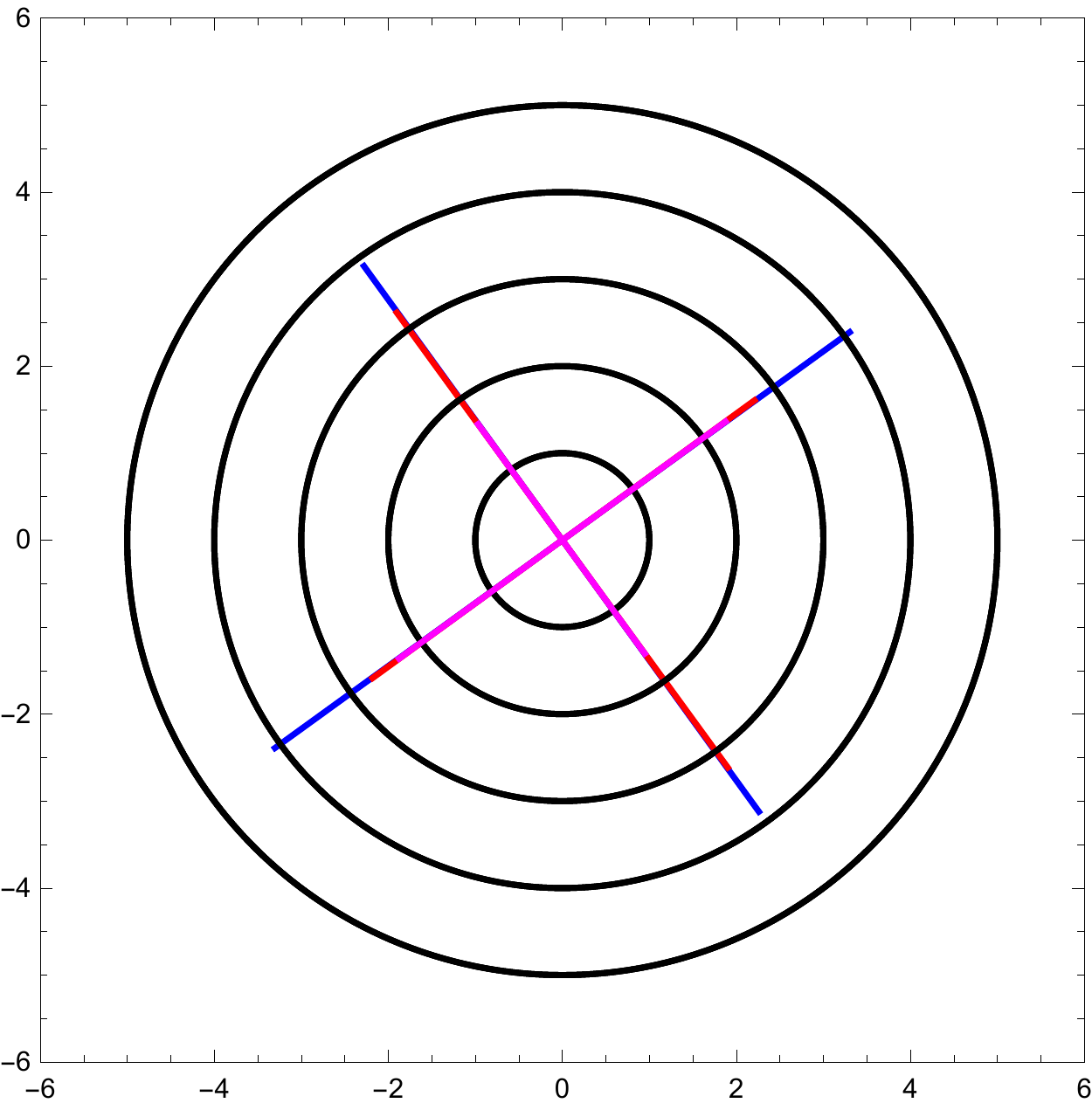}
\caption[]{This figure demonstrates the behavior when the three eigenframes are just a bit perturbed out of the $\pi/6$ alignment.  Again, we see that the initial state is unstable and the system evolves to aligning all the eigenframes.  The initial conditions are ${\bf M}_1(0) =$ $\begin{bmatrix} 5 & 0 \\ 0 & 3 \end{bmatrix}$, ${\bf M}_2(0)= {\bf R}(\frac{11 \pi}{60}) \begin{bmatrix} 4 & 0 \\ 0 & 2 \end{bmatrix} {\bf R}( \frac{-11 \pi }{60})$, and ${\bf M}_3(0)= {\bf R}(\frac{\pi}{3})\begin{bmatrix} 1 & 0 \\ 0 & 3 \end{bmatrix} {\bf R}(- \frac{\pi}{3})$ and the snapshots of evolution are at times $t=0,3,8$. }
\label{fig:TwistPerturbedRotation}
\end{figure}

\section{Stability of Commuting Matrices Fixed Points} \label{Stability of Commuting Matrices}

It is clear that any set of $N$ pairwise commuting $n\times n$ matrices $\{{\bf M}_i\}_{i=1}^N$ gives a fixed point of \eqref{eq:MatrixMoto} or (more correctly) a point on a fixed manifold. Our goal in this section is to show that this point is conditionally stable - the linearization of the flow about this point is negative semi-definite, with the dimension of the null-space equal to the dimension of the manifold of fixed points. To begin with we first count the dimension of the fixed manifold. In addition to setting our expectations with respect to the dimension of the kernel of the linearized operator it will help to motivate the construction of the basis that diagonalizes the linearization. First of all we claim that the manifold of commuting fixed points has dimension $nN + \frac{n(n-1)}{2}.$ To see this note that a family of symmetric matrices is commuting if and only if they can be simultaneously diagonalized, so each family is parameterized by $n$ eigenvalues for each of the $N$ matrices together with a common eigenbasis given by a matrix in $SO(n).$ Thus we expect that the linearized operator will have a null-space of dimension $nN + \frac{n(n-1)}{2}$. We are interested in non-zero eigenvalues of the linearization, as these determine the stability of the fixed point.
We begin by presenting a small lemma, which gives a useful representation for the
tangent space to the space of symmetric matrices. Of course the vector space of symmetric $n\times n$ matrices is isomorphic to ${\mathbb R}^{\frac{n(n+1)}{2}}$, and the tangent space to ${\mathbb R}^k$ is ${\mathbb R}^k$, but it is convenient to
give an explicit decomposition into {\em commuting} and {\em non-commuting} directions.

\begin{lemma}
  Given a real symmetric $n\times n$  matrix ${\bf M}$ with distinct eigenvalues $\lambda_i$ and orthonormal eigenvectors ${\bf v}_i$. There is an orthonormal
  decomposition of the space of symmetric $n\times n$ matrices into commuting and
  non-commuting components as follows: there is an $n$ dimensional subspace of matrices which commute with ${\bf M}$ and an $\frac{n(n-1)}{2}$ dimensional subspace of
  matrices which do not commute with ${\bf M}$ spanned respectively by $\{{\bf C}_\alpha\}_{\alpha=1}^n$ and
  $\{{\bf N}_{\alpha\beta}\}_{\alpha,\beta~\alpha>\beta}^n$
  \begin{align*}
    & {\bf C}_\alpha = {\bf v}_\alpha \otimes {\bf v}_\alpha & \\
    & {\bf N}_{\alpha\beta} = \frac1{\sqrt{2}} \left({\bf v}_\alpha \otimes {\bf v}_\beta + {\bf v}_\beta \otimes {\bf v}_\alpha\right) & \qquad i > j.
  \end{align*}
  This basis is orthonormal under the usual inner product on symmetric matrices $\langle A,B \rangle = \tr(A^TB).$
  \end{lemma}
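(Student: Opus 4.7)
The plan is to verify four claims in sequence: that each ${\bf C}_\alpha$ commutes with ${\bf M}$, that each ${\bf N}_{\alpha\beta}$ does not, that the full collection is orthonormal, and that it spans the whole space (which follows from a dimension count).

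First I would use the spectral representation ${\bf M} = \sum_\gamma \lambda_\gamma {\bf v}_\gamma \otimes {\bf v}_\gamma$ together with the elementary identity $({\bf u}\otimes {\bf v})({\bf x}\otimes {\bf y}) = ({\bf v}\cdot {\bf x})\, {\bf u}\otimes {\bf y}$. Applied to ${\bf C}_\alpha = {\bf v}_\alpha \otimes {\bf v}_\alpha$ and ${\bf M}$, orthonormality of the ${\bf v}_\gamma$ gives ${\bf M}{\bf C}_\alpha = \lambda_\alpha {\bf v}_\alpha \otimes {\bf v}_\alpha = {\bf C}_\alpha {\bf M}$, so ${\bf C}_\alpha$ lies in the commutant. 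For ${\bf N}_{\alpha\beta}$ the same identity yields
\[
[{\bf M},{\bf N}_{\alpha\beta}] = \tfrac{\lambda_\alpha - \lambda_\beta}{\sqrt{2}} \left({\bf v}_\alpha \otimes {\bf v}_\beta - {\bf v}_\beta \otimes {\bf v}_\alpha\right),
\]
which is nonzero because the eigenvalues are distinct. So the ${\bf N}_{\alpha\beta}$ are genuinely non-commuting directions.

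Next I would compute inner products directly. Using the cyclic property of the trace and the same product identity,
\[
\langle {\bf C}_\alpha, {\bf C}_\beta \rangle = \tr({\bf C}_\alpha {\bf C}_\beta) = ({\bf v}_\alpha\cdot{\bf v}_\beta)^2 = \delta_{\alpha\beta},
\]
and a parallel calculation gives $\langle {\bf N}_{\alpha\beta}, {\bf N}_{\gamma\delta}\rangle = \delta_{\alpha\gamma}\delta_{\beta\delta}$ on the ordered index set $\alpha>\beta,\gamma>\delta$ (the factor of $\frac{1}{\sqrt 2}$ is what kills the cross term $({\bf v}_\alpha\otimes {\bf v}_\beta)({\bf v}_\beta\otimes {\bf v}_\alpha)$ which contributes an extra $1$). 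Finally $\langle {\bf C}_\alpha, {\bf N}_{\beta\gamma}\rangle = 0$ because each term in the product pairs ${\bf v}_\alpha$ against a ${\bf v}_\beta$ or ${\bf v}_\gamma$ with $\beta\neq\gamma$, forcing the trace to vanish.

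Finally, counting gives $n + \binom{n}{2} = \frac{n(n+1)}{2}$, which equals the dimension of the space of symmetric $n\times n$ matrices, so the orthonormal collection must be a basis. I would close by noting (this is essentially a consistency check) that distinctness of the eigenvalues forces the commutant of ${\bf M}$ inside the symmetric matrices to be exactly the $n$-dimensional subspace of matrices diagonal in the $\{{\bf v}_\alpha\}$ basis, which is precisely $\operatorname{span}\{{\bf C}_\alpha\}$; consequently the orthogonal complement in the symmetric matrices is the $\binom{n}{2}$-dimensional non-commuting subspace $\operatorname{span}\{{\bf N}_{\alpha\beta}\}$. No real obstacle arises — the only subtle point is keeping track of the $\frac{1}{\sqrt 2}$ normalization on ${\bf N}_{\alpha\beta}$ so the inner product comes out to $1$ rather than $2$.
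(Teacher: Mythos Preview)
Your proof is correct and follows essentially the same route as the paper: both verify $[{\bf M},{\bf C}_\alpha]=0$ directly, compute $[{\bf M},{\bf N}_{\alpha\beta}] = \tfrac{\lambda_\alpha-\lambda_\beta}{\sqrt{2}}({\bf v}_\alpha\otimes{\bf v}_\beta - {\bf v}_\beta\otimes{\bf v}_\alpha)$, and appeal to orthonormality of the ${\bf v}_\alpha$ for the inner-product claim. Your version is simply more explicit --- you carry out the inner-product computations and the dimension count in full, whereas the paper states these as inherited from the eigenvector orthonormality.
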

  \begin{proof}
    It is easy to see the $ {\bf M} {\bf C}_\alpha = {\bf M} {\bf v}_\alpha \otimes {\bf v}_\alpha = \lambda_\alpha {\bf v}_\alpha \otimes {\bf v}_\alpha =   {\bf C}_\alpha {\bf M}$ so $[{\bf C}_\alpha,{\bf M}]=0$ so the matrices commute. Similarly we have that
    \begin{equation}
      [{\bf M},{\bf N}_{\alpha\beta}] = \frac{\lambda_\alpha - \lambda_\beta}{\sqrt{2}} \left({\bf v}_\alpha \otimes {\bf v}_\beta - {\bf v}_\beta \otimes {\bf v}_\alpha\right)
    \end{equation}
    which is clearly non-zero if $\lambda_\alpha$ and $\lambda_\beta$ are distinct. (In the non-generic case where ${\bf M}$ has repeated eigenvalues there are ``extra'' directions that commute with ${\bf M}$.) As the eigenvectors $\{{\bf v}_i \}$ can be chosen to be orthonormal the basis vectors ${\bf C}_\alpha, {\bf N}_{\alpha\beta}$ inherit this property.
    \end{proof}
Next we consider the linearized flow.  We assume that we have a family of $N$ commuting $n \times n$ matrices $\{{\bf M}^0_i\}_{i=1}^N$ with  $[{\bf M}_i^0,{\bf M}_j^0]=0$ for any fixed $i,j$, which forms a fixed point of the matrix Kuramoto flow. We will let $\tilde{{\bf M}}_i$ represent the perturbation of ${\bf M}_i$: in other words ${\bf M}_i = {\bf M}_i^0 + \tilde{\bf M}_i.$ With this notation it is simple to compute that $\tilde{{\bf M}}_i$ satisfies the following linearized flow
\begin{equation}
\frac{d \tilde{{\bf M}}_i}{dt}=\sum \limits_j [{\bf M}_j^0, [\tilde{{\bf M}}_i, {\bf M}_j^0]] + [{\bf M}_j^0,[{\bf M}_i^0,\tilde{{\bf M}}_j]]  \label{eq:2.15}
\end{equation}
which we can write more compactly as
\begin{equation}
  \frac{d\tilde{\bf M}}{dt} =\boldsymbol {\mathcal L}\tilde{\bf M}
  \end{equation}
 where ${\mathcal L}$ is a linear operator on the space of $N$ symmetric $n \times n$ matrices (isomorphic to ${\mathbb R}^{N\frac{n(n+1)}{2}})$.
Interestingly we can compute the spectrum of this linear operator quite explicitly -- this is the content of the next proposition.

\begin{proposition}
For a given set of commuting matrices $\{{\bf M}_i^0\}_{i=1}^N$ the spectrum of $\boldsymbol{\mathcal L}$ as an operator from ${\mathbb R}^{N\frac{n(n+1)}{2}}$ to ${\mathbb R}^{N\frac{n(n+1)}{2}}$ is given as follows. Suppose that ${\bf M}_i^0$ has eigenvalues $\lambda_i^\alpha$ for $\alpha \in \{1,2,\ldots,n\}$.   The eigenvalues of ${\mathcal L}$ are given by
\begin{align}
 0 \qquad & \text{multiplicity} ~~~nN + \frac{n(n-1)}{2} & \\
-\sum_{j=1}^N (\lambda_j^\alpha - \lambda_j^\beta)^2  \qquad & \text{multiplicity}~~~ N-1 ~~\text{for all} ~~\alpha,\beta \in \{1\ldots n\}; ~~\alpha> \beta.&
\end{align}
So generically $\boldsymbol{\mathcal L}$ has a kernel of dimension $nN + \frac{n(n-1)}{2}$ and $\frac{n(n-1)}{2}$ negative eigenvalues, each of multiplicity $N-1$. Of course for non-generic situations we could get additional degeneracies or additional elements of the kernel.

\end{proposition}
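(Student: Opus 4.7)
The plan is to diagonalize the linearized operator $\boldsymbol{\mathcal L}$ by exploiting the commuting structure of the base point together with the explicit orthonormal basis provided by the preceding lemma. Because the ${\bf M}_i^0$ pairwise commute, they share a common orthonormal eigenbasis $\{{\bf v}_\alpha\}_{\alpha=1}^n$ with ${\bf M}_i^0 {\bf v}_\alpha = \lambda_i^\alpha {\bf v}_\alpha$. The lemma then supplies a single orthonormal decomposition of the space of symmetric $n\times n$ matrices, $\{{\bf C}_\alpha\} \cup \{{\bf N}_{\alpha\beta}\}_{\alpha>\beta}$, that is simultaneously adapted to every ${\bf M}_i^0$. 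I would therefore write each perturbation in the form
\[
\tilde{\bf M}_i = \sum_{\alpha} c_{i,\alpha} {\bf C}_\alpha + \sum_{\alpha>\beta} n_{i,\alpha\beta} {\bf N}_{\alpha\beta},
\]
and show that $\boldsymbol{\mathcal L}$ is block-diagonal with respect to both the choice of $\alpha$ (for ${\bf C}_\alpha$) and the choice of pair $(\alpha,\beta)$ (for ${\bf N}_{\alpha\beta}$).

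The key computation, done once for all $i,j$, is that $[{\bf M}_j^0,{\bf C}_\alpha]=0$ and
\[
[{\bf M}_j^0, {\bf N}_{\alpha\beta}] = (\lambda_j^\alpha-\lambda_j^\beta)\,\tilde{\bf N}_{\alpha\beta},\qquad [{\bf M}_j^0, \tilde{\bf N}_{\alpha\beta}] = (\lambda_j^\alpha-\lambda_j^\beta)\,{\bf N}_{\alpha\beta},
\]
where $\tilde{\bf N}_{\alpha\beta}=\frac{1}{\sqrt 2}({\bf v}_\alpha\otimes{\bf v}_\beta-{\bf v}_\beta\otimes{\bf v}_\alpha)$. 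This immediately shows that all of the $c_{i,\alpha}$ coordinates lie in $\ker\boldsymbol{\mathcal L}$, contributing $nN$ zero eigenvalues. Substituting the second identity twice into equation \eqref{eq:2.15}, and writing $\Delta_k:=\lambda_k^\alpha-\lambda_k^\beta$, the ${\bf N}_{\alpha\beta}$-component of the linearized flow decouples pair-by-pair into the $N$-dimensional system
\[
\frac{d n_i}{dt} = -\Bigl(\sum_j \Delta_j^2\Bigr)\,n_i + \Delta_i\sum_j \Delta_j\, n_j.
\]
Thus the restriction of $\boldsymbol{\mathcal L}$ to the subspace spanned by $\{{\bf N}_{\alpha\beta}\}_{i=1}^N$ is the explicit $N\times N$ matrix $-S\,{\bf I} + \boldsymbol{\Delta}\boldsymbol{\Delta}^T$ with $S=\|\boldsymbol{\Delta}\|^2$.

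From here the spectral calculation is elementary: the rank-one operator $\boldsymbol{\Delta}\boldsymbol{\Delta}^T$ has eigenvalue $S$ on $\boldsymbol{\Delta}$ and eigenvalue $0$ on $\boldsymbol{\Delta}^\perp$, so the block has one zero eigenvalue (with eigenvector $\boldsymbol{\Delta}$) and the eigenvalue $-S=-\sum_j(\lambda_j^\alpha-\lambda_j^\beta)^2$ with multiplicity $N-1$. Summing over the $\frac{n(n-1)}{2}$ pairs $(\alpha,\beta)$ with $\alpha>\beta$ gives $N-1$ copies of $-\sum_j(\lambda_j^\alpha-\lambda_j^\beta)^2$ for each pair, together with $\frac{n(n-1)}{2}$ additional zeros from the $\boldsymbol{\Delta}$-directions. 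Combined with the $nN$ zeros from the ${\bf C}_\alpha$-directions this yields exactly $nN+\frac{n(n-1)}{2}$ zero eigenvalues, which agrees with the dimension of the commuting-matrix manifold counted in the preamble, and confirms the remaining $\frac{n(n-1)}{2}$ negative eigenvalues each with multiplicity $N-1$.

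The only real obstacle is the bookkeeping for the nested commutators of the two basis types ${\bf C}_\alpha$ and ${\bf N}_{\alpha\beta}$; once the two identities above are established the rest is linear algebra. The total dimension check $nN + N\cdot\frac{n(n-1)}{2} = N\frac{n(n+1)}{2}$ confirms that this basis accounts for every direction, so no eigenvalues are missed. Self-adjointness of $\boldsymbol{\mathcal L}$ (which follows from the gradient-flow structure) guarantees that the spectrum is real, so the conclusion about conditional stability follows immediately.
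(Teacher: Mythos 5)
Your proposal is correct and follows essentially the same route as the paper: both expand the perturbations in the common symmetric eigen-tensor basis $\{{\bf v}_\alpha\otimes{\bf v}_\alpha\}\cup\{\frac{1}{\sqrt2}({\bf v}_\alpha\otimes{\bf v}_\beta+{\bf v}_\beta\otimes{\bf v}_\alpha)\}$, reduce the linearized flow to the trivial evolution in the commuting directions and an $N\times N$ block $\boldsymbol\Delta\boldsymbol\Delta^T-\|\boldsymbol\Delta\|^2{\bf I}$ for each pair $\alpha>\beta$, and then read off the eigenvalue $0$ (multiplicity $1$) and $-\sum_j(\lambda_j^\alpha-\lambda_j^\beta)^2$ (multiplicity $N-1$) of that rank-one-minus-scalar block. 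The dimension count and the commutation identities are identical to the paper's; the self-adjointness remark is a harmless extra, since the explicit diagonalization already gives real eigenvalues.
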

\begin{proof}
Commuting matrices share a common set of eigenvectors, a fact we use to construct a particularly convenient  orthonormal basis for the set of  $n \times n$ symmetric matrices.
Given a set $\{{\bf v}_\alpha \}_{\alpha=1}^n$ of orthonormal eigenvectors of the commuting family of matrices $\{{\bf M}_i^0\}_{i=1}^N$ we define a basis for for $n\times n$ symmetric matrices as follows.
\[
\{ {\bf v}_\alpha \otimes {\bf v}_\alpha \}_{\alpha=1}^n \cup \left\{ \frac{1}{\sqrt{2}} {\bf v}_\alpha \otimes {\bf v}_\beta + \frac{1}{\sqrt{2}} {\bf v}_\beta \otimes {\bf v}_\alpha\right\}_{\alpha>\beta}.
\]
It is straightforward to check that (assuming that $\{{\bf v}_\alpha \}_{\alpha=1}^n$ are orthogonal) this basis is orthonormal under the usual matrix inner product $\langle A,B\rangle = \tr(A^T B)$.
Using this basis, $\tilde{{\bf M}}_i$ can be written as
$$\tilde{{\bf M}}_i(t) = \sum \limits_{\alpha=1}^{k} a_\alpha^i (t) {\bf v}_\alpha \otimes {\bf v}_\alpha + \sum \limits_{\alpha>\beta} \frac{b_{\alpha_\beta}^i (t) }{\sqrt{2}}({\bf v}_\alpha \otimes {\bf v}_\beta+{\bf v}_\beta \otimes {\bf v}_\alpha),$$
and we have the following commutation rules:
\begin{align*}
 [{\bf M}_i^0, {\bf v}_\alpha \otimes {\bf v}_\alpha ] &= 0, \\
 [{\bf M}_i^0 , {\bf v}_\alpha \otimes {\bf v}_\beta + {\bf v}_\beta \otimes {\bf v}_\alpha ] &= (\lambda_i^\alpha - \lambda_i^\beta) ({\bf v}_\alpha \otimes {\bf v}_\beta - {\bf v}_\beta \otimes {\bf v}_\alpha ), \\
 [{\bf M}_i^0 , {\bf v}_\alpha \otimes {\bf v}_\beta - {\bf v}_\beta \otimes {\bf v}_\alpha ] &= (\lambda_i^\alpha - \lambda_i^\beta) ({\bf v}_\alpha \otimes {\bf v}_\beta + {\bf v}_\beta \otimes {\bf v}_\alpha ).
\end{align*}
We can see that the vectors $\{{\bf v}_\alpha \otimes {\bf v}_\alpha\}_{\alpha=1}^n$ lie in the kernel of the operator $[{\bf M}_1^0,\cdot]$: they represent the ``commuting" directions, and will be responsible for the $nN$ dimensional part of the kernel. The remaining directions $\{{\bf v}_\alpha \otimes {\bf v}_\beta + {\bf v}_\beta \otimes {\bf v}_\alpha\}_{\alpha>\beta}^n$ behave non-trivially under commutation with ${\bf M}_i^0$. Each of these will give a block, on which the linearized operator looks like a graph Laplacian. There will be one zero eigenvalue in each of these blocks, giving the additional $\binom{n}{2}$ directions in the kernel.

Given the above commutation relations we next compute the linearization using this representation of $\tilde{{\bf M}}_i$ for a fixed $i$, $j$.
\begin{align*}[{\bf M}_i^0,\tilde{{\bf M}}_j]&={\bf M}_i^0 \tilde{{\bf M}}_j-\tilde{{\bf M}}_j {\bf M}_i^0
\\
&= {\bf M}_i^0 \left(\sum \limits_{\alpha=1}^{k} a_\alpha^j {\bf v}_{\alpha} \otimes {\bf v}_{\alpha}  + \sum \limits_{\alpha> \beta} b_{\alpha_\beta}^j ({\bf v}_{\alpha} \otimes {\bf v}_{\beta}+{\bf v}_{\beta} \otimes {\bf v}_{\alpha}) \right)
\\
&\ - \left(\sum \limits_{\alpha=1}^{k} a_\alpha^j {\bf v}_{\alpha} \otimes {\bf v}_{\alpha} + \sum \limits_{\alpha> \beta} b_{\alpha_\beta}^j ({\bf v}_{\alpha} \otimes {\bf v}_{\beta}+{\bf v}_{\beta} \otimes {\bf v}_{\alpha})\right) {\bf M}_i^0 \\
&=\sum \limits_{\alpha=1}^{k} \lambda_i^\alpha a_\alpha^j {\bf v}_{\alpha} \otimes {\bf v}_{\alpha} + \sum \limits_{\alpha> \beta} b_{\alpha_\beta}^j (\lambda_i^\alpha {\bf v}_{\alpha} \otimes {\bf v}_{\beta}+ \lambda_i^\beta {\bf v}_{\beta} \otimes {\bf v}_{\alpha})
\\
&\ - \sum \limits_{\alpha=1}^{k}\lambda_i^\alpha a_\alpha^j {\bf v}_{\alpha} \otimes {\bf v}_{\alpha} - \sum \limits_{\alpha> \beta} b_{\alpha_\beta}^j (\lambda_i^\beta {\bf v}_{\alpha} \otimes {\bf v}_{\beta}+ \lambda_i^\alpha {\bf v}_{\beta} \otimes {\bf v}_{\alpha})
\\
&= \sum \limits_{\alpha > \beta} b_{\alpha_\beta}^j (\lambda_{i}^\alpha-\lambda_{i}^\beta) ({\bf v}_{\alpha} \otimes {\bf v}_{\beta}-{\bf v}_{\beta} \otimes {\bf v}_{\alpha}). \end{align*}
In a similar fashion we find that
\[
[{\bf M}_j^0,[{\bf M}_i^0, \tilde{{\bf M}_j}]] =\sum \limits_{\alpha > \beta} b_{\alpha \beta}^j (\lambda_i^\alpha - \lambda_i^\beta) (\lambda_j^\alpha-\lambda_j^\beta)({\bf v}_\alpha \otimes {\bf v}_\beta + {\bf v}_\beta \otimes {\bf v}_\alpha)
\]
and finally
\[
[{\bf M}_j^0,[\tilde{{\bf M}_i},{\bf M}_j^0]]=- \sum \limits_{\alpha > \beta} b_{\alpha \beta}^i (\lambda_j^\alpha - \lambda_j^\beta)(\lambda_j^\alpha - \lambda_j^\beta)({\bf v}_\alpha \otimes {\bf v}_\beta + {\bf v}_\beta \otimes {\bf v}_\alpha)
\]

Thus, \eqref{eq:2.15} can be written as
$$\frac{d \tilde{{\bf M}_i}}{dt}= \sum \limits_{j} \sum \limits_{\alpha > \beta} b_{\alpha \beta}^j (\lambda_i^\alpha- \lambda_i^\beta)(\lambda_j^\alpha - \lambda_j^\beta) ({\bf v}_\alpha \otimes {\bf v}_\beta + {\bf v}_\beta \otimes {\bf v}_\alpha)- \sum \limits_{j} \sum \limits_{\alpha > \beta} b_{\alpha \beta}^i (\lambda_j^\alpha - \lambda_j^\beta) (\lambda_j^\alpha - \lambda_j^\beta) ({\bf v}_\alpha \otimes {\bf v}_\beta+ {\bf v}_\beta \otimes {\bf v}_\alpha)$$
where the coefficients $a_\alpha$ and $b_{\alpha\beta}$ evolve according to
\begin{align*}
\frac{d{a}_\alpha^i}{dt}&=0\\
  \frac{d{b}_{\alpha \beta}^i}{dt}&= \sum \limits_{j} {b}_{\alpha \beta}^j (\lambda_i^\alpha-\lambda_i^\beta)(\lambda_j^\alpha - \lambda_j^\beta)-\sum \limits_{j} { b}_{\alpha \beta}^i (\lambda_j^\alpha - \lambda_j^\beta)^2
\end{align*}
Thus we have block-diagonalized the linearized equations of motion. The evolution in $a_\alpha$, the commuting direction, is trivial. We can represent the $\frac{d{\bf b}_{\alpha_\beta}}{dt}$ by
\begin{align}
\frac{d b^i_{\alpha \beta}}{dt} &=\left[(\lambda_i^\alpha-\lambda_i^\beta)(\lambda_j^\alpha-\lambda_j^\beta)-\sum \limits_{j} (\lambda_i^\alpha-\lambda_i^\beta)^2 \delta_{ij}\right] b_{\alpha \beta}^j\\
\frac{d{\bf b}_{\alpha\beta}}{dt}  &= ({\bf v}_{\alpha\beta} \otimes {\bf v}_{\alpha\beta} - \mu {\bf I}) {\bf b}_{\alpha\beta}
\end{align}
where ${\bf b}_{\alpha\beta}=[ b_{\alpha \beta}^1, b_{\alpha \beta}^2, b_{\alpha \beta}^3, \ldots b_{\alpha \beta}^N ]^T$ and the matrix ${\bf L}_{\alpha\beta}={\bf v} \otimes {\bf v} - \mu {\bf I}$ where
\[
{\bf v}=\left[(\lambda_1^\alpha - \lambda_1^\beta), (\lambda_2^\alpha - \lambda_2^\beta), (\lambda_3^\alpha - \lambda_3^\beta), ..., (\lambda_N^\alpha - \lambda_N^\beta)\right]^T
\] and $\mu=\sum \limits_{j=1}^N (\lambda_j^\alpha-\lambda_j^\beta)^2$. It is easy to see that the eigenvalues of a matrix ${\bf v} \otimes {\bf v} - \mu {\bf I}$ are $\Vert {\bf v} \Vert^2-\mu$, with multiplicity one (the corresponding eigenvector if ${\bf v}$) and $-\mu$ with multiplicity $N-1$, the eigenvectors being any orthogonal basis for the $(N-1)$ dimensional subspace orthogonal to ${\bf v}$.

Thus the eigenvalues and eigenvectors of ${\bf L}$  are:
\begin{center}
\begin{tabular}{c l}
  0 & with multiplicity 1\\
  $\displaystyle -\sum_{j=1}^N (\lambda_j^\alpha - \lambda_j^\beta)^2$ & with multiplicity $N-1$ \\
\end{tabular}
\end{center}
with corresponding eigenvectors
\begin{align*}
{\bf v}_{\alpha\beta} &= [\lambda_1^\alpha-\lambda_1^\beta,\lambda_2^\alpha-\lambda_2^\beta,\ldots,\lambda_N^\alpha-\lambda_N^\beta]^t\\
 {\bf w} &\in [\lambda_1^\alpha-\lambda_1^\beta,\lambda_2^\alpha-\lambda_2^\beta,\ldots,\lambda_N^\alpha-\lambda_N^\beta]^\perp.
\end{align*}

\end{proof}

\subsection{Numerical Simulations}

We conclude this section with some numerical experiments. It appears that for generic initial conditions in the absence of external forcing (${\bf \Omega}_i = 0$) the solutions always converge to a family of commuting matrices. We illustrate this with a couple of numerical simulations. The first numerical simulation depicts the evolution of a set of three $3 \times 3$ matrices that are initially pair-wise non-commuting. Since the eigenframes are difficult to visualize in three and higher dimensions
we instead choose to plot the Hilbert-Schmidt norms of the pairwise commutators, $\Vert [{\bf M}_i, {\bf M}_j]\Vert^2$ as functions of time. The results of one such expermient are depicted in Figure \ref{commutatornorm3x3}. We see that there is a very rapid decay of the Hilbert-Schmidt norms to zero, indicating that the matrices are asymptotically commuting. Figure \ref{commutatornorm4x4} depicts a similar experiment with three $4 \times 4$ matrices. Again we see a similar behavior -- there is a rapid decy in the Hilbert-Schmidt norms as the eigenframes move to align. We have repeated these experiments a number of times, varying both the number of matrices and the size of the matrices. In all cases we have observed very rpid convergence to a commuting set of matrices.
\begin{figure}
\includegraphics{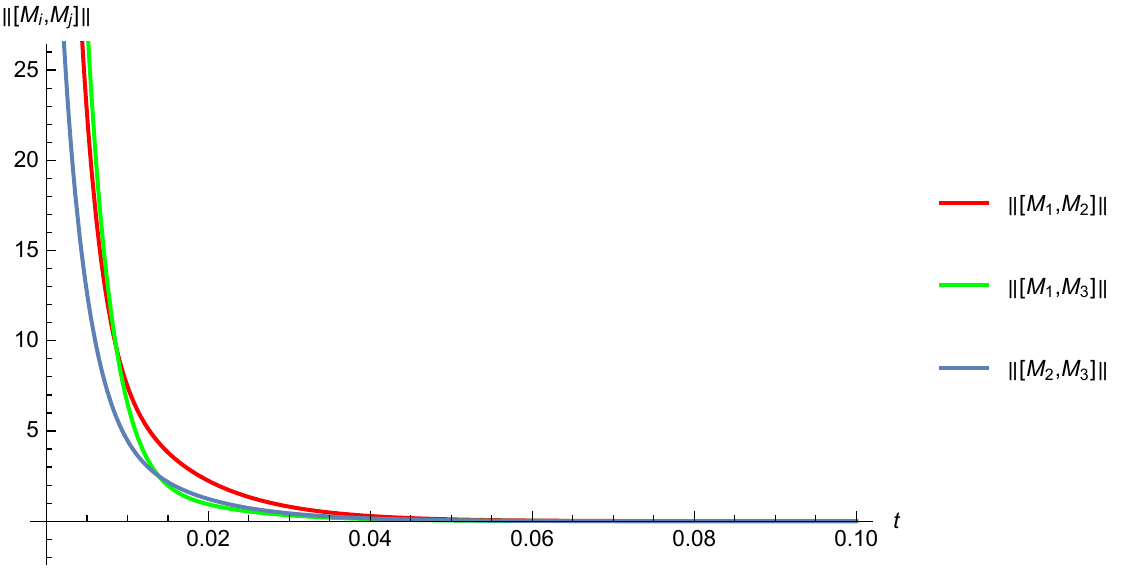}
\caption[] {The Hilbert-Schmidt norm of the pairwise commutators in a $3 \times 3$ example. The initial conditions in this example are: ${\bf M}_1(0)=$ $\begin{bmatrix} 1 & 7 & 3 \\ 7 & 4 & 5 \\ 3 & -5 & 6 \end{bmatrix}$, ${\bf M}_2(0)= \begin{bmatrix} 1 & 2 & 4 \\ 2 & 6 & 5 \\ 4 & 5 & 3 \end{bmatrix}$, and ${\bf M}_3(0)= \begin{bmatrix} 1 & 0 & 1 \\ 0 & 8 & 6 \\ 1 & 6 & 4 \end{bmatrix}$.}
\label{commutatornorm3x3}
\end{figure}

\begin{figure}
\includegraphics{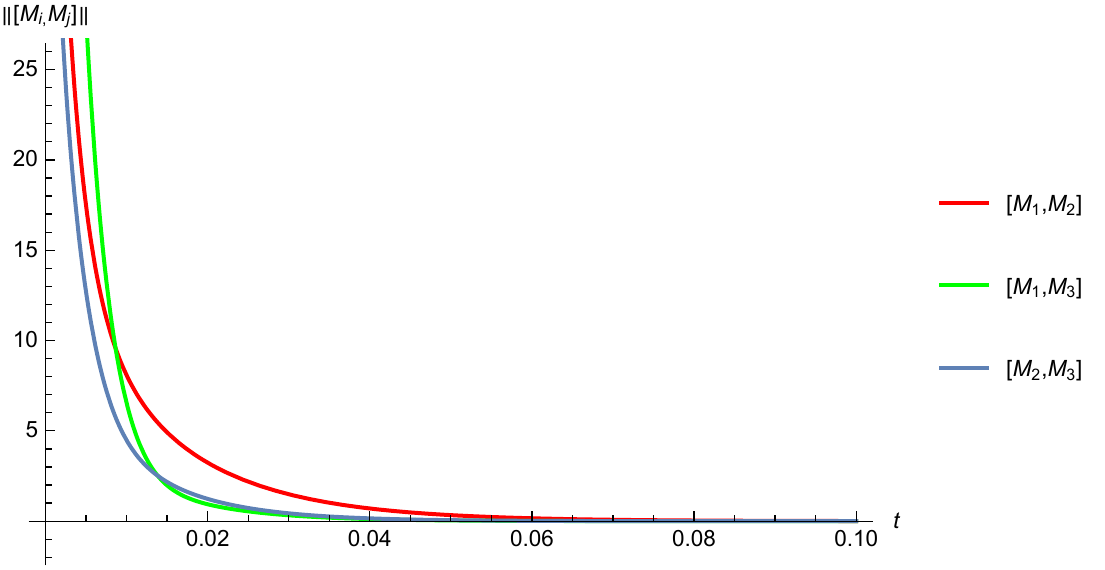}
\caption[]{The Hilbert-Schmidt norm of the pairwise commutators in a $4 \times 4$ example. The initial conditions in this example are: ${\bf M}_1(0)=\begin{bmatrix} 4 & 0 & -1 & 1 \\ 0 & 2 & 3 & 7 \\ -1 & 3 & 4 & 6 \\ 1 & 7 & 6 & 2 \end{bmatrix}$, ${\bf M}_2(0)= \begin{bmatrix} 1 & 2 & 3 & -1 \\ 2 & 4 & 6 & 2 \\ 3 & 6 & 0 & 8 \\ -1 & 2 & 8 & 1  \end{bmatrix}$, and ${\bf M}_3(0)= \begin{bmatrix} 2 & 3 & 1 & -2\\ 3 & 6 & 4 & 3 \\ 1 & 4 & -1 & 8 \\ -2 & 3 & 8 & 12 \end{bmatrix}$.}
\label{commutatornorm4x4}
\end{figure}

\section{Conclusion}

In this paper we proposed a new, purely real-valued, non-Abelian generalization of the Kuramoto model to symmetric matrix-valued variables, and proved three results for this model. We have shown that in the case of $2 \times 2$ matrices the system can be described by three equations. One of these equations is similar to the classical Kuramoto model with dynamic coupling. However, the angle difference is multiplied by 4 as a result of our consideration of the aligning of eigenvector frames as opposed to angles.  We have also defined twist states for the matrix-valued model in $\mathbb{R}^2$ to be the case in which the matrices have a constant eigenvalue difference and have eigenvectors pointing in directions which are equally spaced around the unit circle. We have proven the twist states are dynamically unstable. Additionally, we have proven that fixed points associated with families of commuting matrices of any size are conditionally stable.

Thus far we have only considered twist states in the case of $2 \times 2$ matrices. In future research, it would be interesting to investigate twist states for larger matrices. In addition, throughout this paper we have assumed all oscillators to have the same natural frequency and the absence of a constrained gradient flow.   In other words, analysis of Equation \eqref{eq:FullMatrixMoto} should yield some interesting results.

\section{Acknowledgments}

J.C.B. and S.E.S.  would like to acknowledge support under NSF grant NSF- DMS 1615418.

\noindent T.E.C. would like to acknowledge support from Caterpillar Fellowship Grant at Bradley University.

\bibliographystyle{plain}
\bibliography{MatrixMotoBib}

\appendix

\section{Derivation of the Gradient Flow}
\label{app:DerivationOfFlow}

Recall the energy function defined in Section \ref{sec:intro},
$$
E= \frac{1}{4} \sum \limits_{i,j} a_{ij} \tr[{\bf M}_i,{\bf M}_j]^2.
$$
The energy $E$ is a scalar valued function of $N$, $k\times k$  matrices ${\bf M}_i$ and $\frac{\partial E}{\partial {\bf M}_i}$ is a $k\times k$ matrix whose entries are the derivatives of $E$ with respect to the corresponding entry of ${\bf M}_i$.  Individual components of a matrix will be denoted $M_{i;\alpha\beta}$ with Greek indices for the matrix components. As in the main text the derivatives are defined
\[
\left(\frac{\partial E}{\partial {\bf M}_i}\right)_{\alpha\beta} :=  \frac{\partial E}{\partial {M}_{i;\alpha \beta}}.
\]
where $M_{i;\alpha \beta}$ is the $(\alpha, \beta)$ entry of the matrix ${\bf M}_i$.

We consider $\frac 14 \tr[{\bf M}_i,{\bf M}_j]^2$, a single term in the sum of the energy function and want to compute the derivative with respect to $M_{i;\alpha \beta}$.  Using the fact that the trace operator is linear and invariant under cyclic permutations of the matrix product, we have that
\begin{align*}
  \tr\left([{\bf M}_i,{\bf M}_j]\right)^2 &=
  2\tr\left({\bf M}_j{\bf M}_i{\bf M}_j{\bf M}_i\right)
  - 2\tr\left(({\bf M}_j)^2({\bf M}_i)^2\right) \\
                                          & =
2\sum\limits_{\gamma ,\delta , \tau , \kappa} \left(M_{j;\gamma_\delta}M_{i;\delta_\tau}M_{j;\tau_\kappa}M_{i;\kappa_\gamma}-  M_{j;\gamma_\delta}M_{j;\delta_\tau}M_{i;\tau_\kappa}M_{i;\kappa_\gamma}\right).
\end{align*}
Applying the identity that $\frac{\partial M_{j;\tau\kappa}}{\partial M_{i;\alpha\beta}} = \delta_{ij} \left(\delta_{\alpha\tau}\delta_{\beta\kappa} +\delta_{\alpha\kappa}\delta_{\beta\tau} \right)$
\begin{align*}
\frac{\partial }{\partial M_{i;\alpha \beta}}\left(\tr[{\bf M}_i,{\bf M}_j]^2 \right) & =
                                                                                                     4 \sum \limits_{\delta , \gamma} M_{j;\alpha \delta}M_{j;\delta \gamma}M_{i;\gamma \beta}\\
  &+ 4\sum \limits_{\gamma, \delta}  M_{i;\alpha \beta}M_{j;\gamma \delta }M_{j;\delta \beta} - 8 \sum \limits_{\gamma, \kappa} M_{j;\alpha \gamma}M_{i;\gamma \kappa}M_{j;\kappa \beta}
\end{align*}
Note that the resultant here is the $\alpha \beta$ entry of the matrix product
\begin{eqnarray*}
4\left( {\bf M}_j{\bf M}_j{\bf M}_i+{\bf M}_j{\bf M}_j{\bf M}_i-2{\bf M}_j{\bf M}_i{\bf M}_j\right)\\
= 4\left( {\bf M}_j[{\bf M}_j,{\bf M}_i]+[{\bf M}_i,{\bf M}_j]{\bf M}_j\right]\\
= 4[{\bf M}_j,[{\bf M}_j,{\bf M}_i].
\end{eqnarray*}
Then, for fixed $j$,
\[
\frac{\partial E}{\partial {\bf M}_i} =
 a_{ij}[{\bf M}_j,[{\bf M}_j,{\bf M}_i]].
\]
Then, summing over all the terms yields the matrix Kuramoto flow \eqref{eq:MatrixMoto}
\begin{align*}
\frac{\partial {\bf M}_i}{\partial t} &= -\frac{\partial E}{\partial {\bf M}_{i}}\\
  &=-\sum_j a_{ij}[{\bf M}_j,[{\bf M}_j,{\bf M}_i]].
\end{align*}

\section{Computation of the eigenvalues and eigenvectors of the linearization about a twist state.}
\label{app:CirculantBlockLambdas}

In order to apply Equation \eqref{eq:ReducedTwistJacobian}, for each $\alpha \in \{0,1,2,...,N-1\}$ we need to determine the associated eigenvalue for each block of ${\bf J}$, Equation \eqref{eq:BlockJacobian}.
Since the first entry of  ${\bf v}^{(\alpha)}$ is $1$ the associated eigenvalue is given by the dot product of the first row of each block with the vector ${\bf v}^{(\alpha)}$.  The following properties of the discrete Fourier transform will be useful here.
\begin{align}
\sum \limits_{j=0}^{n-1}e^{\frac{2 \pi ijk}{n}} \cos\left(\frac{2 \pi j}{n}\right) &= \begin{cases} 0, & k \neq \pm 1 \\ \frac{n}{2}, & k= \pm 1 \mod n\end{cases}\label{eq:DFTcosine}\\
\sum \limits_{j=0}^{n-1} e^{\frac{2 \pi ijk}{n}} \sin\left(\frac{2 \pi j}{n}\right) &= \begin{cases} 0, & k \neq \pm 1 \\ \frac{n}{2i}, & k=1 \mod n \\ \frac{-n}{2i}, & k=-1 \mod n \end{cases}\label{eq:DFTsine} \\
\sum \limits_{j=0}^{n-1} e^{\frac{2 \pi ijk}{n}} &= \begin{cases} n, & k=0 \mod n \\ 0, & k \neq 0 \mod n \end{cases}\label{eq:DFT}
\end{align}

We begin by looking at the $\frac{\partial g}{\partial \theta}$ block. The top row of this block is
\[
\begin{bmatrix} 1 & \cos\left(\frac{2 \pi}{N}\right) & \cos\left(\frac{4 \pi}{N}\right) & \cos\left(\frac{6 \pi}{N}\right) & ... & \cos\left(\frac{2 \pi (N-1)}{N}\right) \end{bmatrix}.
\]
To find the $k^{th}$ eigenvalue $a_k$ we take the dot product of  this row by ${\bf v}^{(k)}$ yields
\begin{align*}
a_k &=1+ e^{2 \pi i k/N} \cos\left(\frac{2 \pi}{N}\right)+ e^{4 \pi i k/N} \cos\left(\frac{4 \pi}{N}\right)+...+ e^{2 \pi i k (N-1)/N}\cos\left(\frac{2 \pi (N-1)}{N}\right)
\\
&= \sum_{j=0}^{N-1} e^{\frac{2 \pi i j k}{N}} \cos\left(\frac{2 \pi j}{N}\right)
\end{align*}
By the given identities this sum is equal to $N/2$ if $k=1$ or $k=N-1$ and is zero otherwise.

The eigenvalues for the other two blocks follow in the same way. The first row of the $\frac{\partial g}{\partial \Delta \lambda}$ block is given by
\[
\begin{bmatrix} 0 &\frac{1}{2}\sin\left(\frac{2 \pi}{N}\right) &\frac{1}{2}\sin\left(\frac{4 \pi}{N}\right) & ... & \frac{1}{2}\sin\left(\frac{2 \pi (N-1)}{N}\right) \end{bmatrix}.
\]
It is clear that since the entries of the row are given by $\sin \frac{2 \pi j}{N}$ the only non-zero Fourier coefficients will be $k=1$ and $k=N-1$ giving $b_1 = - i N/4$ and $b_{N-1} = i N/4$, with all of the remaining eigenvalues equal to zero.

Finally we consider the block $\frac{\partial f}{\partial \Delta\theta}$. The first row of this block looks like
\[
\begin{bmatrix} 0 & -1+ \cos\left(\frac{2 \pi}{N}\right) & -1 + \cos\left(\frac{4 \pi}{N}\right) & ... & -1+ \left(\cos\left(\frac{2 \pi (N-1)}{N}\right)\right)\end{bmatrix}.
\]
Since this term is the sum of a constant and a cosine term we will have three non-zero Fourier coefficients and thus three non-zero eigenvalues. Applying the above identities gives
\begin{align*}
  &c_0 = -N& \\
  & c_1 = \frac{N}{2} & \\
  & c_{N-1}= \frac{N}{2} &
\end{align*}
with all of the remaining eigenvalues zero.

\end{document}